\newtheorem{theorem}{Theorem}
 \numberwithin{theorem}{section}
\newtheorem{proposition}[theorem]{Proposition}
\newtheorem{remark}[theorem]{Remark}
\newtheorem{example}[theorem]{Example}
\newtheorem{conjecture}[theorem]{Conjecture}
\theoremstyle{definition}
\newcommand{\xp}{\mathfrak{p}}
\newcommand{\xg}{\mathfrak{g}}
\newcommand{\xq}{\mathfrak{q}}
\newcommand{\PP}{\mathbb{P}}
\newcommand{\RR}{\mathbb{R}}
\newcommand{\QQ}{\mathbb{Q}}
\newcommand{\ZZ}{\mathbb{Z}}
 \title{An Octanomial Model for Cubic Surfaces}
 \author{Marta Panizzut}
 \address{%
Institut f\"ur Mathematik, 
TU Berlin \\
\email{panizzut@math.tu-berlin.de}
}
\author{Emre Can Sert\"oz}
\address{%
MPI for Mathematics in the Sciences, Leipzig \\
\email{emresertoz@gmail.com}
}
\author{ Bernd Sturmfels}
\address{%
MPI for Mathematics in the Sciences, Leipzig \\
\email{bernd@mis.mpg.edu }
}
\begin{document}
\maketitle
\begin{abstract}
\noindent
We present a new normal form for cubic surfaces that is well suited for $p$-adic geometry, 
as it reveals the intrinsic del~Pezzo combinatorics of the $27$ trees in the tropicalization.
The new normal form is a polynomial with eight terms, written in moduli from the ${\rm E}_6$ hyperplane arrangement. If such~a surface is tropically smooth then its $27$ tropical lines are distinct.
We focus on explicit computations, both
symbolic and $p$-adic numerical.
\end{abstract}

\section{Introduction}

Any configuration of six distinct points in the projective plane $\PP^2$
lies on a cuspidal cubic. Thus, after an automorphism of $\PP^2$,
 the homogeneous  coordinates of the points are the
columns of a $3 \times 6$ matrix that has the following special form:
\begin{equation}
\label{eq:sixpoints}
\begin{pmatrix} 
 1 & 1 & 1 & 1 & 1 & 1 \\
d_1 & d_2 & d_3 & d_4 & d_5 & d_6 \smallskip \\
d_1^3 & d_2^3 & d_3^3 & d_4^3 & d_5^3 & d_6^3
\end{pmatrix}.
\end{equation}
The $3 \times 3$-minors of this matrix factor into linear factors, and so does the condition for the six points to lie on a conic. The linear forms are
$\,d_i - d_j\,$ and $\, d_i+d_j+d_k \,$ and $\, d_1+d_2+d_3+d_4+d_5+d_6$, for a total of $36 =15+20+1$.
These define the $36$ hyperplanes in the reflection arrangement of  Coxeter type ${\rm E}_6$.
The complement of this hyperplane arrangement uniformizes
the $4$-dimensional moduli space of cubic surfaces.
This representation of marked del Pezzo surfaces of degree three
is used in many sources, including
\cite{CGL, CD, HKT, RSS}.

In this paper we study the cubic surface in $\PP^3$ that is obtained by blowing up
$\PP^2$ at the six points in (\ref{eq:sixpoints}). We write this cubic explicitly in terms of 
its moduli parameters $d_i$. To this end,
we choose the following specific basis for the four-dimensional space of ternary cubics 
that vanish on our six points in $\PP^2$:
\begin{equation}
\label{eq:factoredbasis}
x = F_{12} F_{34} F_{56} \, , \quad
y = F_{13} F_{25} F_{46} \, , \quad
z = F_{12} F_{35} F_{46} \, , \quad
w = F_{13} F_{24} F_{56}.
\end{equation}
The factors are linear forms that vanish on pairs of points. 
Explicitly, these are
\begin{equation}
\label{eq:lineinplane} \qquad F_{ij} \,\,= \,\,
 d_i d_j (d_i+d_j) \cdot X \,-\, (d_i^2+d_i d_j+d_j^2) \cdot Y \,+\, Z
 \quad {\rm for} \,\, 1 \leq i < j \leq 6.
\end{equation}
As  in (\ref{eq:factoredbasis}) and (\ref{eq:lineinplane}), we shall write
$(X:Y:Z)$ and $(x:y:z:w)$ for the homogeneous coordinates on
$\PP^2$ and $\PP^3$ respectively. The resulting cubic surface is defined by
\begin{equation}
\label{eq:octanomial}
a \cdot xyz \,\,+\,
b \cdot xyw \,\,+\,
c \cdot xzw\,\,+\,
d \cdot yzw\,\,+\,
 e \cdot x^2 y \,\,+\,\,
f \cdot x y^2 \,\,+ \,\,
g \cdot z^2 w \,\,+\,\,
h \cdot z w^2 .
\end{equation}
The  coefficients $a,b,\ldots,h$ of this octanomial are quintics
 in the moduli parameters $d_1,\ldots,d_6$.
These are displayed in Proposition~\ref{prop:coeffs}.
The {\em support}  of~(\ref{eq:octanomial}) equals
  \begin{equation} \label{eq:support} \mathcal{A} \,= \,
\bigl\{ (1110), (1101), (1011), (0111), (2100), (1200), (0021), (0012) \bigr\}.
\end{equation}
The symmetry group of the Newton polytope ${\rm conv}(\mathcal{A})$ is isomorphic to $(\ZZ/2\ZZ)^3$.
It  acts by compatibly
  permuting the six points in $\mathbb{P}^2$ and the coordinates in $\PP^3$.
   
\begin{figure}[ht]
  \centering
  \includegraphics{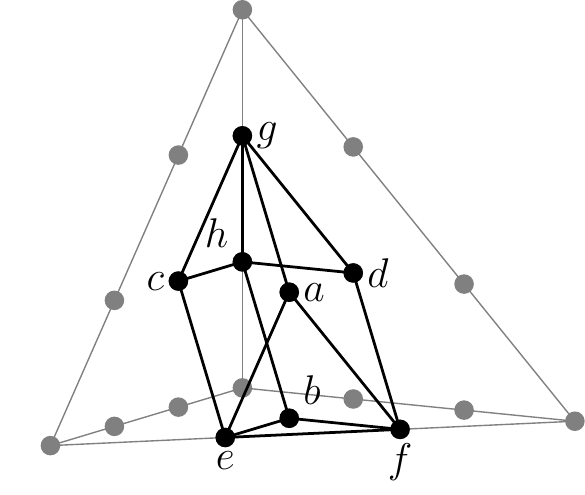}
  \caption{The Newton polytope ${\rm conv}(\mathcal{A})$ of the octanomial model. }
  \label{polytope}
\end{figure}
   
We propose the octanomial (\ref{eq:octanomial}) as a new normal form for cubic surfaces. 
Our study was inspired by the recent work of
Cueto and Deopurkar~\cite{CD}. They map
cubic surfaces from $\PP^3$ into $\PP^{44}$ by the linear forms
of the $45$ tritangent planes. They prove that 
 this embedding reveals the arrangement of $27$ trees
determined in \cite{RSS}. This raises the following question: {\em Which of the
coordinate projections $\,\PP^{44} \dashrightarrow \PP^3\,$
best preserve the  tropical line arrangement?}
We examined all $\,\binom{45}{4} =148,995 \,$ possibilities.
Among those with smallest support, we chose
the projection in~(\ref{eq:factoredbasis}), and we embarked on
the detailed study presented here.

For our octanomial surfaces, 
tropical smoothness imposes a striking 
constraint on the field of definition of the $27$ lines  (Theorem~\ref{thm:distinct_lines}). 
We believe that the same constraint also holds for cubics with full support (Conjecture~\ref{conj:kristin}).

The material that follows is organized into three sections.
In Section~\ref{sec2} we write the 
octanomial cubic~(\ref{eq:octanomial}) in terms of the moduli $d_1,\ldots,d_6$,
we compute its discriminant, and
we classify the unimodular triangulations of its Newton polytope (Theorem~\ref{thm:triangulations}).
The universal Fano scheme for the octanomial model,
described in Proposition~\ref{prop:ufv}, shows
that the $27$ lines are grouped into $15$ clusters.

Section~\ref{sec3} concerns the del Pezzo combinatorics
of tropically smooth cubic surfaces over $p$-adic fields. Tropicalization takes
their $27$ lines to arrangements of $27$ 
distinct trees. We study these arrangements and how they 
correspond to the different types of surfaces.
In addition to the stable arrangements in \cite{RSS},
we encounter some new types. These confirm
results on non-stable surfaces in~\cite{CD}.

Section~\ref{sec4} is independent of the earlier sections.
Here the focus is not on the octanomial model but we study
 general dense cubic surfaces~(\ref{eq:cubicf}) in $\PP^3$.
Based on
Theorems~\ref{thm:project} and~\ref{thm:eleven}, we offer
algorithms for computing their intrinsic structure
as a del Pezzo surface over a field with valuation.
Example \ref{ex:type7} shows the construction of dense cubics
that are tropically smooth, Naruki general and have 
their six points in $\PP^2$ over $\QQ$.
This addresses Question~11 from the {\em Twenty-seven Questions about the 
Cubic Surface} \cite{RS19}.
We conclude with a method that transforms a general cubic
into octanomial form~(\ref{eq:octanomial}), by identifying its 
moduli coordinates $d_1,\ldots,d_6$.

This paper is heavily computational. It relies on numerous implementations
and experiments with the software
 {\tt Macaulay2} \cite{M2}, {\tt Magma} \cite{Magma} and {\tt Maple} \cite{Maple}.
Our codes and our data are posted at our supplementary 
materials website\footnote{\url{https://software.mis.mpg.de/octanomial}}.

\section{Algebra and Combinatorics} \label{sec2}

We begin by presenting explicit formulas for working with the octanomial model.

\begin{proposition} \label{prop:coeffs}
The eight coefficients of the cubic surface (\ref{eq:octanomial}) defined by~(\ref{eq:factoredbasis})~are
$$ \footnotesize  \begin{matrix}
 a   \,= &  d_1  d_3  d_2  d_4 ( d_1+ d_3- d_2- d_4)+ d_2  d_4  d_5  d_6 ( d_2+ d_4- d_5- d_6)+ d_5  d_6  d_1  d_3 ( d_5+ d_6- d_1- d_3) + \\ 
&      \!\!\! \!\!\!\!\!\! \!\! d_5  d_6 ( d_5{+} d_6) ( d_1^2{+} d_3^2 {-} d_2^2  {-} d_4^2)
+ d_1  d_3 ( d_1{+} d_3) ( d_2^2{+} d_4^2{-} d_5^2{-} d_6^2)
+ d_2  d_4 ( d_2{+} d_4) ( d_5^2{+} d_6^2{-} d_1^2{-} d_3^2) \smallskip \\
 b  \,= &  d_1  d_2  d_3  d_5 ( d_1+ d_2- d_3- d_5)+ d_3  d_5  d_4  d_6 ( d_3+ d_5- d_4- d_6)+ d_4  d_6  d_1  d_2 ( d_4+ d_6- d_1- d_2) + \\
&     \!\!\! \!\!\!  \!\!\! \!\! d_4  d_6 ( d_4{+} d_6) ( d_1^2{+} d_2^2{-} d_3^2{-} d_5^2)+ d_1  d_2 ( d_1{+} d_2) 
( d_3^2{+} d_5^2{-} d_4^2{-} d_6^2)+ d_3  d_5 ( d_3{+} d_5) (d_4^2{+} d_6^2{-} d_1^2{-} d_2^2)
\smallskip \\
 c \, = &  d_1  d_3  d_2  d_5 ( d_1+ d_3- d_2- d_5)+ d_2  d_5  d_4  d_6 ( d_2+ d_5- d_4- d_6)+ d_4  d_6  d_1  d_3 ( d_4+ d_6- d_1- d_3) + \\
&      \!\!\! \!\!\! \!\!\! \!\! d_4  d_6 ( d_4{+} d_6) ( d_1^2{+} d_3^2{-} d_2^2{-} d_5^2)+ d_1  d_3 ( d_1{+} d_3)
 ( d_2^2{+} d_5^2 {-} d_4^2{-} d_6^2)+ d_2  d_5 ( d_2{+} d_5) ( d_4^2{+} d_6^2{-} d_1^2{-} d_3^2)
 \smallskip \\
  d  \,= &  d_1  d_2  d_3  d_4 ( d_1+ d_2- d_3- d_4)+ d_3  d_4  d_5  d_6 ( d_3+ d_4- d_5- d_6)+ d_5  d_6  d_1  d_2 ( d_5+ d_6- d_1- d_2) + \\
  &    \!\!\!  \!\!\! \!\!\! \!\! d_5  d_6 ( d_5{+} d_6) ( d_1^2{+} d_2^2{-} d_3^2{-} d_4^2)
  + d_1  d_2 ( d_1{+} d_2) ( d_3^2{+} d_4^2{-} d_5^2{-} d_6^2)
  + d_3  d_4 ( d_3{+} d_4) ( d_5^2{+} d_6^2{-} d_1^2{-} d_2^2)
  \smallskip \\
  e  \,= & -( d_1+ d_3+ d_5) ( d_2+ d_4+ d_6) ( d_1- d_5) ( d_2- d_6) ( d_3- d_4)
\qquad \qquad \qquad \qquad \qquad \qquad \qquad \quad \, \smallskip \\
 f  \,= & -( d_1+ d_2+ d_4) ( d_3+ d_5+ d_6) ( d_1- d_4) ( d_2- d_5) ( d_3- d_6)
\qquad \qquad \qquad \qquad \qquad \qquad \qquad \quad \, \smallskip \\
 g \, = & -( d_1+ d_3+ d_4) ( d_2+ d_5+ d_6) ( d_1- d_4) ( d_2- d_6) ( d_3- d_5)
 \qquad \qquad \qquad \qquad \qquad \qquad \qquad \quad \,\smallskip \\
h \, = & -( d_1+ d_2+ d_5) ( d_3+ d_4+ d_6) ( d_1- d_5) ( d_3- d_6) ( d_2- d_4)
\qquad \qquad \qquad \qquad \qquad \qquad \qquad \quad \,
\end{matrix} 
$$
\end{proposition}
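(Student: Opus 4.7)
\emph{Proof plan.} The rational map $\phi\colon \PP^2 \dashrightarrow \PP^3$, $(X:Y:Z) \mapsto (x:y:z:w)$ defined by~(\ref{eq:factoredbasis}) is given by a basis of the $4$-dimensional linear system of cubics through the six points in~(\ref{eq:sixpoints}), so its image is a cubic surface and we seek the unique (up to scalar) cubic polynomial $P(x,y,z,w)$ in the kernel of $\phi^*$. My plan is to expand the octanomial~(\ref{eq:octanomial}) in the variables $X, Y, Z$ and then match coefficients to solve for $a, b, \ldots, h$.

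First I would substitute~(\ref{eq:factoredbasis}) into each of the eight cubic monomials indexed by $\mathcal{A}$, turning each into a polynomial of degree $9$ in $X, Y, Z$ with coefficients in $\ZZ[d_1, \ldots, d_6]$. A direct inspection of the eight products shows that $G := F_{12} F_{13} F_{46} F_{56}$ divides each of them, so after dividing through by $G$ the relation $P = 0$ becomes an identity among eight quintics in $X, Y, Z$. Comparing coefficients of the $\binom{7}{2}=21$ degree-$5$ monomials in $X, Y, Z$ then yields a linear system of $21$ equations in the eight unknowns $a, \ldots, h$, with coefficients in $\ZZ[d_1, \ldots, d_6]$; the solution is unique up to scalar by irreducibility of the cubic surface.

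Next I would exploit the $(\ZZ/2\ZZ)^3$ symmetry of $\mathrm{conv}(\mathcal{A})$ noted after~(\ref{eq:support}) to cut the work by a factor of four. Each generator simultaneously permutes the coordinates $x, y, z, w$ and the moduli $d_1, \ldots, d_6$, acting transitively on $\{a, b, c, d\}$ and on $\{e, f, g, h\}$. Thus it suffices to compute $a$ and $e$ explicitly, and the remaining six coefficients are obtained by reindexing. A natural route is to fix the normalization by specializing to $(X:Y:Z)=(1:0:0)$, where $F_{ij}$ collapses to $d_i d_j(d_i+d_j)$; the reduced monomials then become tidy products of $d_i$-factors and one reads off $e$, up to scalar, as the stated product of $5$ linear forms. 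The formulas for $a$ and its partners follow from a small subsystem once $e$ is pinned down.

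The main obstacle is the sheer size of the symbolic expansion: even after dividing by $G$, the $21$ equations carry coefficients of degree up to $10$ in $d_1, \ldots, d_6$, and the displayed closed forms for $a, b, c, d$ -- each a sum of six quintic terms -- emerge only after substantial cancellations that are not illuminating to carry out by hand. In practice, the proposition is verified most efficiently by substituting the stated expressions into~(\ref{eq:octanomial}) and checking, term by term, that the resulting degree-$9$ polynomial in $X, Y, Z$ vanishes identically over $\QQ(d_1, \ldots, d_6)$; this verification is delegated to computer algebra, as in the authors' \texttt{Macaulay2} scripts posted on the supplementary website.
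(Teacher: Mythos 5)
Your proposal is essentially correct, but it reaches the result by a different computational route than the paper. The paper's proof applies the Hilbert--Burch theorem: the four cubics in (\ref{eq:factoredbasis}) are realized as maximal minors of a $3\times 4$ matrix of linear forms in $X,Y,Z$, which is then rearranged into a $3\times 3$ matrix with entries in $K[x,y,z,w]_1$ whose determinant is the octanomial; this simultaneously proves the formula and yields a determinantal representation of the surface (and explicit formulas for the $27$ lines) as a by-product. You instead pull back the octanomial through $\phi$ and match coefficients: your observation that $G=F_{12}F_{13}F_{46}F_{56}$ divides all eight pulled-back monomials is correct (each of the eight products of the factored cubics visibly contains these four factors), so the identity does reduce to one among quintics in $X,Y,Z$, and the $(\ZZ/2\ZZ)^3$ symmetry argument to transport $a\mapsto b,c,d$ and $e\mapsto f,g,h$ is also legitimate. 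Both proofs are ultimately computer-algebra verifications over $K=\QQ(d_1,\dots,d_6)$; yours is more elementary and transparent about why the answer is determined (the degree-$3$ part of the ideal of the image is one-dimensional), while the paper's buys the extra structural payoff of the determinantal representation.

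Two small points to tighten. First, the specialization $(X:Y:Z)=(1:0:0)$ gives only a single linear relation among $a,\dots,h$, so it cannot by itself ``read off'' $e$; you would need several such specializations (or the full coefficient comparison), though this does not affect your final verification route. Second, for the substitute-and-check argument to pin down the coefficients you should note explicitly that the six points with generic $d_i$ are in general position, so the image of $\phi$ is an irreducible cubic surface whose ideal is generated by a single cubic, and that the displayed octanomial is not identically zero (e.g.\ $e\neq 0$ in $K$); then vanishing of the degree-$9$ pullback forces the displayed octanomial to be the defining equation up to scalar, which is exactly the content of the proposition.
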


\begin{proof}[Proof and Discussion]
This was found by a calculation in {\tt Macaulay2} over the field
$K = \mathbb{Q}(d_1,d_2,d_3,d_4,d_5,d_6)$, carried out
with the help of Mike Stillman. According to the 
Hilbert--Burch Theorem, the cubics (\ref{eq:factoredbasis}) that
cut out the six points in $\PP^2$ are maximal minors
of a  $3 \times 4$ matrix with entries in $K[X,Y,Z]_1$. We computed this $ 3 \times 4$ matrix
and we rearranged it to
 a $3 \times 3$ matrix with entries in $K[x,y,z,w]_1$ whose
 determinant is the octanomial  (\ref{eq:octanomial}) with the above coefficients.
 This  determinantal representation of our cubic, as well as explicit
 formulas for all $27$ lines over $K$, are posted on our supplementary
 website.
\end{proof}

\begin{remark} \rm The eight coefficients in Proposition \ref{prop:coeffs} sum to zero.
The image of the parametrization above is a quartic hypersurface in the hyperplane
$\PP^6 \subset \PP^7$ given by $\,a+b+c+d+e+f+g+h=0$. The equation of that
hypersurface in $\PP^6$ has $134$ terms when written in the first seven coordinates.
It equals $\, 4a^3b+4a^3e+4a^3f+9a^2b^2+4a^2bc+4a^2bd+20a^2be +
\cdots  +13b^2e^2+ \cdots +4f^4+4f^3g +4f^2g^2 = 0 $.
\end{remark}

\begin{remark} \label{rmk:quintic} \rm The expansions of the quintics 
$a,b,c,d$ in Proposition  \ref{prop:coeffs}
have $36$ terms in $d_1,d_2,\ldots,d_6$.
 These are precisely the $D_4$-invariant quintics that are denoted by $Q$ in 
\cite[Example 2.6]{CD} and denoted by $F_1$ in \cite[Lemma 4.4]{CGL}.
\end{remark}

\begin{proposition} \label{prop:discriminant}
The discriminant of the cubic (\ref{eq:octanomial}) is the degree $32$ polynomial
\begin{equation}
\label{eq:discriminant}
 2^{16} 3^{5}\cdot e^2 f^2 g^2 h^2 \cdot (ac-eg)^2 \cdot (ad-fg)^2 \cdot (bc-eh)^2 \cdot (bd-fh)^2 \cdot
\Delta_{\mathcal{A}}, 
\end{equation}
where $\mathcal{A}$ is the point configuration (\ref{eq:support}) and $ \Delta_\mathcal{A}$  is the {\em $\mathcal{A}$-discriminant}
\cite[Chapter~9]{GKZ}. In our case, the $\mathcal{A}$-discriminant
has $49$ terms of degree $8$, namely 
$$ \footnotesize \begin{matrix}
\! \Delta_\mathcal{A} \,\,=\,\,
a^4 b^2 h^2-2 a^3 b^3 g h-2 a^3 b^2 c d h-2 a^3 b c f h^2-2 a^3 b d e h^2+4 a^3 e f h^3+a^2 b^4 g^2
{-}2 a^2 b^3 c d g \\
 {+}a^2 b^2 c^2 d^2 +8 a^2 b^2 c f g h+8 a^2 b^2 d e g h+4 a^2 b c^2 d f h
+4 a^2 b c d^2 e h-6 a^2 b e f g h^2+a^2 c^2 f^2 h^2 \\ -10 a^2 c d e f h^2  {+} a^2 d^2 e^2 h^2 
-2 a b^3 c f g^2-2 a b^3 d e g^2+4 a b^2 c^2 d f g+4
 a b^2 c d^2 e g-6 a b^2 e f g^2 h \\ 
  -2 a b c^3 d^2 f-2 a b c^2 d^3 e  - 10 a b c^2 
f^2 g h -26 a b c d e f g h-10 a b d^2 e^2 g h-2 a c^3 d f^2 h+8 a c^2 d^2 e f h \\ 
-2 a c d^3 e^2 h{+}18 a c e f^2 g h^2{+}18 a d e^2 f g h^2 {+} 4 b^3 e f g^3  +b^2 c^2 
f^2 g^2-10 b^2 c d e f g^2+b^2 d^2 e^2 g^2 \\
 -2 b c^3 d f^2 g+8 b c^2 d^2 e f g-2
 b c d^3 e^2 g + 18 b c e f^2 g^2 h  + 18 b d e^2 f g^2 h  +\,c^4 d^2 f^2-2 c^3 d^3 e f \\
+4 c^3 f^3 g h+c^2 d^4 e^2-6 c^2 d e f^2 g h-6 c d^2 e^2 f g h+4 d^3 e^3 g h-27 e^2 f^2 g^2 h^2.
\end{matrix}
$$
\end{proposition}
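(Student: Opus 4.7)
The plan is to combine the Gelfand--Kapranov--Zelevinsky theory of $\mathcal{A}$-discriminants, which predicts the factorization structurally, with a direct symbolic verification that pins down the explicit octic and the leading constant. To begin, I would set up the discriminant of the cubic surface cut out by~(\ref{eq:octanomial}) as an elimination: $F$ is singular precisely when the four partials $F_x, F_y, F_z, F_w$ share a common zero in $\PP^3$. Eliminating $(x:y:z:w)$ yields a polynomial in $a,\ldots,h$ of degree $4\cdot(3-1)^3 = 32$, matching the claimed total degree.

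Next, I would predict the factorization by viewing the result as the principal $\mathcal{A}$-determinant of the support~(\ref{eq:support}), which decomposes as a product over faces of ${\rm conv}(\mathcal{A})$. The four monomials $x^2y$, $xy^2$, $z^2w$, $zw^2$ sit at vertices of the Newton polytope; a direct check shows that $(1:0:0:0)$ is a singular point of $F$ exactly when $e = 0$, and symmetrically for $f, g, h$, producing the block $e^2f^2g^2h^2$. The four binomials $ac - eg$, $ad - fg$, $bc - eh$, $bd - fh$ each encode a circuit, e.g.\ $(1110)+(1011) = (2100)+(0021)$; the four points of such a relation span a parallelogram face of ${\rm conv}(\mathcal{A})$ whose face discriminant is precisely that binomial. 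GKZ assigns each of these vertex and edge contributions multiplicity $2$.

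A degree count then forces the residual factor to have degree $32 - 8 - 16 = 8$, matching the expected degree of the full-support $\mathcal{A}$-discriminant, which is irreducible by GKZ. It remains to verify the explicit $49$-term expansion of $\Delta_\mathcal{A}$ and the numerical constant $2^{16} 3^5$: this is a direct symbolic computation in {\tt Macaulay2}, formed by computing the Jacobian discriminant, dividing out the predicted factors $e^2 f^2 g^2 h^2 (ac-eg)^2 (ad-fg)^2 (bc-eh)^2 (bd-fh)^2$, and comparing the quotient with the displayed expression term by term.

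The main obstacle is making the degree-$32$ elimination tractable over $\QQ[a,\ldots,h]$. Practical devices include exploiting the $(\ZZ/2\ZZ)^3$ symmetry of ${\rm conv}(\mathcal{A})$, which permutes $(a,b,c,d)$ and interchanges the pairs $(e,f) \leftrightarrow (g,h)$, specializing parameters to random values modulo large primes, and reconstructing the answer by rational interpolation and Chinese remaindering. Pinning down the precise multiplicities of the vertex and edge factors, rather than only their presence, is the conceptually subtlest point and is where the GKZ secondary-polytope machinery would need to be invoked with care.
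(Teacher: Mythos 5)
Your proposal is correct and is essentially the paper's argument: the paper likewise computes the discriminant as the resultant of the four partial derivatives of (\ref{eq:octanomial}) -- concretely via Nanson's $20\times 20$ determinantal formula for four quaternary quadrics, rather than generic elimination with specialization and interpolation -- and then identifies $\Delta_{\mathcal{A}}$ by stripping off the factors supported on proper faces of ${\rm conv}(\mathcal{A})$. One small caution: do not lean on the heuristic that ``GKZ assigns multiplicity $2$'' to the vertex and edge contributions, since in the principal $\mathcal{A}$-determinant (\ref{eq:EA}) the edge binomials occur with exponent $1$ and the multiplicities in the classical discriminant must be read off from the actual computation, which is exactly what your final division-and-compare step does.
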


\begin{proof}
The resultant of four quaternary quadrics was written as the
determinant of a $20 \times 20$ matrix by Nanson \cite{Nan}.
We apply Nanson's formula to the four partial derivatives of 
(\ref{eq:octanomial}). The result is the expression shown in (\ref{eq:discriminant}).
The $\mathcal{A}$-discriminant is obtained by removing all factors in
(\ref{eq:discriminant}) that are supported on proper faces of 
the convex hull of $\mathcal{A} = 
\{ (1110), (1101), \ldots ,(0012) \}$.
\end{proof}

The study of the {\em Schl\"afli fan} in \cite{JPS}
 identified another family of cubic surfaces with eight terms.
The next theorem is analogous to the combinatorial results
 reported in \cite[Section 7]{JPS}, but now for the new octanomial model 
 (\ref{eq:octanomial}).

\begin{theorem} \label{thm:triangulations}
The configuration $\mathcal{A}$ is the vertex set
of a $3$-dimensional lattice polytope of normalized volume $7$.
This polytope has $70$ regular triangulations in $14$ symmetry classes under the action of $(\ZZ/2\ZZ)^3$.
Unimodular triangulations occur for $53$ of the $70$ triangulations. They come in $10$ symmetry classes:
$$ \footnotesize \begin{matrix}
\! \hbox{Orbit size} &  \hbox{Stanley--Reisner ideal} & \hbox{representative weights} &  \hbox{GKZ vector} \\
1 & \langle ah, bg, cf, de, eg, eh, fg, fh \rangle &   (4,1,7,2,9,5,9,9) & (5,5,5,5,2,2,2,2) \\
4 & \langle ab, ac, ah, cd, cf, eh, fg, fh \rangle &         (5, 2, 8, 4, 2, 9, 5, 9) &  (2, 5, 2, 5, 5, 2, 5, 2) \\ 
4 & \langle ab, ah, bg, cf, eg, eh, fg, fh \rangle &             (8, 3, 1, 1, 4, 9, 6, 9) &  (3, 3, 5, 7, 4, 2, 2, 2) \\ 
4 & \langle ab, ac, ah, bc, bg, cf, fg, fh, egh \rangle &        (9, 9, 6, 1, 4, 8, 8, 5) &  (2, 2, 3, 7, 6, 2, 3, 3) \\ 
4  & \!\!\! \langle ab, ac, ad, ah, bc, cd, cf, fh, bfg, deh\rangle &  (7, 3, 9, 1, 2, 4, 1, 9) &  (1, 4, 1, 4, 6, 3, 6, 3) \\ 
4 & \!\!\! \langle ab, ac, ad, ah, bc, bd, bg, cf, egh, fgh \rangle & (8, 8, 2, 6, 1, 6, 3, 7) &  (1, 1, 3, 5, 6, 4, 4, 4) \\ 
8 & \langle ab, ac, ah, bg, cf, eh, fg, fh \rangle &              (4, 2, 4, 1, 3, 8, 4, 4) &  (2, 3, 4, 7, 5, 2, 3, 2) \\ 
8 & \langle ab, ac, ad, ah, bg, cf, eh, fh \rangle &              (4, 4, 3, 1, 1, 1, 1, 7) &  (1, 3, 4, 6, 5, 3, 4, 2) \\ 
8 & \langle ab, ac, ad, ah, cd, cf, eh, fh, bfg \rangle &     (9, 3, 8, 4, 3, 9, 2, 9) &  (1, 5, 2, 4, 5, 3, 6, 2) \\ 
8 & \langle ab, ac, ad, ah, bc, bg, cf, fh, egh \rangle &   (9, 9, 4, 3, 4, 7, 3, 8) &  (1, 2, 3, 6, 6, 3, 4, 3) 
\end{matrix}
$$
\end{theorem}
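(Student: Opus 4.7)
The statement is essentially a computational enumeration, so the plan is to reduce everything to a well-defined combinatorial computation on the finite point set $\mathcal{A}$, and then run it.

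First I would verify the basic polytope data. Since every point of $\mathcal{A}$ has coordinate sum $3$, the configuration lies in an affine $3$-plane; checking that any four points are affinely independent confirms $\dim \mathrm{conv}(\mathcal{A}) = 3$ and that all $8$ points are vertices. The normalized volume can be computed by picking any triangulation and summing the determinants of the simplices, or by a single call to a polytope package such as \texttt{polymake}; the answer should be $7$. At the same time, I would realize the symmetry group $(\ZZ/2\ZZ)^3$ as concrete permutations of $\mathcal{A}$ — the three involutions swap $(x,y)\leftrightarrow(z,w)$, $e\leftrightarrow f$, and $g\leftrightarrow h$ in the labeling of (\ref{eq:octanomial}) — and record the resulting permutation action on the $8$ labels.

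Next I would enumerate all regular triangulations of $\mathcal{A}$ using \texttt{TOPCOM} (command \texttt{points2triangs -{}-regular}), which returns a finite list. The assertion is that this list has length $70$. I would then quotient this list by the $(\ZZ/2\ZZ)^3$ action computed above, producing the claimed $14$ symmetry classes, and for each class record the orbit size (these must sum to $70$). A triangulation is unimodular iff every maximal simplex has determinant $\pm 1$; filtering this condition from the $70$ triangulations should leave $53$ in $10$ orbits, with the orbit sizes $1,4,4,4,4,4,8,8,8,8$ summing to $53$ as in the table.

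Finally, for one representative $T$ in each of the $10$ unimodular orbits I would compute the three invariants displayed in the table. The Stanley–Reisner ideal is generated by the squarefree monomials $\prod_{i\in S} x_i$ as $S$ ranges over the minimal non-faces of $T$; a direct combinatorial scan against the list of maximal simplices suffices. A representative weight vector $w\in\RR^8$ certifying that $T$ is regular is produced by solving the linear program that asks the lower envelope of the lifted points $(v_i,w_i)$ to project onto $T$; \texttt{TOPCOM} can also output such a $w$ directly. The GKZ vector has $i$-th entry equal to the sum of normalized volumes of the simplices of $T$ containing the vertex $v_i$; since each simplex has volume $1$ and the total volume is $7$, every GKZ vector lies in the $3$-simplex $\{\phi \in \ZZ_{\ge 0}^8 : \sum \phi_i = 4\cdot 7 = 28\}$, which matches the tabulated entries.

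The only genuine obstacle is making the output reproducible and canonical: different software packages label points in different orders and may choose different orbit representatives, so care is needed to present the Stanley–Reisner ideals and GKZ vectors in a form consistent with the labeling induced by (\ref{eq:support}). All supporting scripts and machine output are to be cross-checked against the supplementary materials website, which makes this verification routine rather than delicate.
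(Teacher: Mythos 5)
Your proposal is correct in outline, but it follows the geometric route that the paper only mentions in passing, while the printed proof takes the algebraic route of GKZ: the authors compute the principal $\mathcal{A}$-determinant $E_{\mathcal{A}}$ in (\ref{eq:EA}), obtain each GKZ vector as the exponent vector of the lowest monomial of $E_{\mathcal{A}}$ for a generic weight $\upsilon$, and encode the corresponding triangulation by the squarefree initial ideal $\mathrm{in}_{\upsilon}(I_{\mathcal{A}})$ of the toric ideal (\ref{eq:happytoric}), so the regular triangulations, their Stanley--Reisner ideals and their GKZ vectors all fall out of a single Newton-polytope/Gr\"obner computation. Your plan --- enumerate regular triangulations with \texttt{TOPCOM}, certify regularity by a lower-envelope LP, read the Stanley--Reisner ideal from the minimal non-faces, and get the GKZ vector as the vertex-volume sums (correctly normalized to total $28$) --- produces the same table and is arguably more direct. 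What the paper's algebraic route buys is that the factorization (\ref{eq:EA}) links the triangulation census to the discriminant of Proposition \ref{prop:discriminant}, which is precisely the bridge exploited later in Theorems \ref{thm:smoothsmooth} and \ref{thm:distinct_lines}; also note the convention issue the paper flags, namely that lowest monomials of $E_{\mathcal{A}}$ match highest monomials of $I_{\mathcal{A}}$, so your LP certificates must use the same convention if the tabulated weights are to select the tabulated ideals.

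Two small repairs to your write-up. First, affine independence of some four points gives $\dim\mathrm{conv}(\mathcal{A})=3$, but it does not by itself show that all eight points are vertices; check each point against the facet description (\ref{eq:principalA}) or by a separate LP. Second, your description of the symmetry action is not right as stated: the swap $e\leftrightarrow f$ alone is not a lattice symmetry of $\mathrm{conv}(\mathcal{A})$, because $a,b,c,d$ affinely span the ambient $3$-plane, so any affine symmetry fixing them is the identity. The coordinate swap $x\leftrightarrow y$ induces the label permutation $(c\,d)(e\,f)$, the swap $z\leftrightarrow w$ induces $(a\,b)(g\,h)$, and $(x,y)\leftrightarrow(z,w)$ induces $(a\,c)(b\,d)(e\,g)(f\,h)$. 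In practice you should compute the automorphism group of the configuration directly (e.g.\ in \texttt{polymake}) and use that induced permutation action on the eight labels when forming the orbits; with the correct action the orbit sizes $1,4,8$ and the totals $70=\sum$ over $14$ classes and $53=\sum$ over $10$ classes come out as in the statement.
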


\begin{proof}[Proof and explanation]
The vertex set of the  3-dimensional polytope ${\rm conv}(\mathcal{A})$ is $\mathcal{A}$ which we identify with $\{a,b,c,d,e,f,g,h\}$. This polytope has eight facets:
\begin{equation}
\label{eq:principalA} aceg \,,\quad
adfg  \,,\quad
bceh  \,,\quad
bdfh \,,\quad 
aef \,,\quad
bef \,,\quad
cgh \,,\quad
dgh .
\end{equation}
The derivation of its regular triangulations is a computation, using either an algebraic
approach or a geometric approach.
The following algebraic approach is based on \cite[Chapter 10]{GKZ}.
Namely, we compute the {\em principal $\mathcal{A}$-determinant}
\begin{equation} \label{eq:EA}
E_\mathcal{A} \,\,\, = \,\,\,
 a b c d \cdot e^2 f^2 g^2 h^2 \cdot (ac-eg) \cdot (ad-fg) \cdot (bc-eh) \cdot (bd-fh) \cdot \Delta_{\mathcal{A}}.   
\end{equation}
The GKZ vectors are the exponent vectors of the lowest monomials of 
(\ref{eq:EA}) with respect to all generic weight vectors $\upsilon \in \mathbb{R}^8$.
Representative weight vectors $\upsilon$ are shown in the third column.
The associated triangulation is a simplicial complex of dimension $3$ on
the vertex set $\mathcal{A}$. As is customary in combinatorial commutative
algebra \cite{GBCP}, we encode this simplicial complex by its {\em Stanley--Reisner ideal}, shown
in the second column. This squarefree monomial ideal is the initial ideal
$\,{\rm in}_\upsilon(I_\mathcal{A})\,$ of the {\em toric ideal} 
$I_\mathcal{A}$ associated with our configuration $\mathcal{A}$. 
\end{proof}

\begin{remark} The toric ideal of $\mathcal{A}$ is minimally generated by eight quadrics:
\begin{equation}
\label{eq:happytoric} \!\! I_\mathcal{A} \, = \,
\bigl\langle \,
\underline{ab}-cf \,, \,
\underline{ac}-eg \,, \,
\underline{ad}-fg \, , \,
\underline{ah}-cd \, ,\,
\underline{bg}-cd \, , \,
\underline{cf}-de\,, \,
\underline{eh}-bc\,, \,
\underline{fh}-bd \,
\bigr\rangle. \,
\end{equation}
We underlined the highest monomials with respect to
the weight vector $\upsilon$ in row~8 of the table above.
These monomials generate the initial ideal
$\,{\rm in}_\upsilon(I_\mathcal{A})$.
The eight generators 
form the reduced Gr\"obner basis
of $I_\mathcal{A}$ with respect to~$\upsilon$.
\end{remark}

\begin{remark}
When computing regular triangulations
algebraically, it is important to note is that the meaning of {\em initial monomial} depends
on the context. For a given weight vector $\upsilon$, we take 
lowest monomials in the principal $\mathcal{A}$-determinant $E_\mathcal{A}$ in order
to match taking highest monomials in the toric ideal~$I_\mathcal{A}$.
\end{remark}

Each line in $\PP^3$ is encoded by its Pl\"ucker coordinates
$(p_{01}:p_{02}:p_{03}:p_{12}:p_{13}:p_{23}) \in \PP^5$. To be precise, the affine cone over the
line is the image of the skew-symmetric $4 \times 4$-matrix $(p_{ij})$.
The universal Fano variety of  (\ref{eq:octanomial})  lives in
$\PP^5 \times \PP^7$. Its points are lines on octanomial cubic surfaces. 
We shall present two descriptions of these lines, first from the
perspective of equations (Proposition \ref{prop:ufv}),
and second in terms of the intrinsic del Pezzo geometry 
(Proposition \ref{prop:intrinsic}).

The ideal $I_{\rm ufv}$ of the universal Fano
variety is minimally generated by $1+20$ polynomials in
$\,\QQ[p_{01},p_{02},p_{03},p_{12},p_{13},p_{23},a,b,c,d,e,f,g,h]$.
This is implied by the results in \cite[Section 6]{JPS}.
The following census of the $27$ lines on our cubic surface was obtained  with {\tt Macaulay2}. It arises from the primary decomposition of $I_{\rm ufv}$ in the above polynomial ring with $14$ variables.

\begin{proposition} \label{prop:ufv}
The ideal $I_{\rm ufv}$ of the Fano variety in $\PP^5 \times \PP^7$
has $15$ minimal primes, $9 = 4+4+1$ of degree $1$, and
$6 = 2+4$ of degree~$3$.
The degree is the size of the fiber over the $\PP^7$ of octanomials~(\ref{eq:octanomial}).
Four of the $27$ lines are the coordinate lines
$\langle x,z \rangle$, $\langle x,w \rangle $, 
$\langle y,z \rangle$, $\langle y,w \rangle$.
Another four correspond to lines in coordinate planes in $\PP^3$, namely
$\langle x, dy {+} gz {+} hw \rangle$,
$\langle y, cx {+} gz {+} hw \rangle$,
$\langle z,ex {+} fy {+} bw \rangle$ and
$\langle w,ex {+} fy {+} az \rangle$.
One unique line  is disjoint from the coordinate lines. That line
has Pl\"ucker coordinates (\ref{eq:inthemiddle}).
Two triples of lines intersect pairs of coordinate lines,
and four triples of lines  intersect precisely one coordinate line each.
\end{proposition}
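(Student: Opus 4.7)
The plan is to realize the fifteen-component decomposition as the output of a primary decomposition computation, and then to account geometrically for the primes that emerge. First I would construct $I_{\rm ufv}$ explicitly: restricting the octanomial (\ref{eq:octanomial}) to a parametrized line, collecting the four coefficients, and translating to Plücker coordinates yields, together with the Plücker relation $p_{01}p_{23}-p_{02}p_{13}+p_{03}p_{12}$, the $1+20$ minimal generators asserted above (this is the analogous construction in \cite[Section 6]{JPS}). I would then call \texttt{primaryDecomposition} in \texttt{Macaulay2} over $\QQ[p_{01},\ldots,p_{23},a,\ldots,h]$; since every generator is bihomogeneous, the resulting primes inherit bihomogeneity, and each one projects to a subvariety of $\PP^7$ whose generic fiber is a union of lines on the corresponding octanomial cubic.

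Before trusting the computer I would account for as many components as possible from the combinatorics of $\mathcal{A}$. Every monomial in $\mathcal{A}=\{xyz,\,xyw,\,xzw,\,yzw,\,x^2y,\,xy^2,\,z^2w,\,zw^2\}$ contains at least one of $x,z$ and at least one of $y,w$, so the coordinate lines $\langle x,z\rangle$, $\langle x,w\rangle$, $\langle y,z\rangle$, $\langle y,w\rangle$ lie on \emph{every} surface in the family, giving four linear minimal primes. Restricting (\ref{eq:octanomial}) to the plane $\{x=0\}$ gives $d\,yzw+g\,z^2w+h\,zw^2 = zw(dy+gz+hw)$, contributing the extra line $\langle x,\,dy+gz+hw\rangle$; the three symmetric restrictions to $y=0$, $z=0$, $w=0$ supply three more. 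This pins down eight of the nine linear components before any computation is run.

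Next I would saturate $I_{\rm ufv}$ by the product of these eight known primes and decompose the result. One further component is again linear: the unique `interior' line disjoint from the four coordinate lines, which is characterised by being the only fiber component on which none of the six Plücker coordinates vanishes identically in $a,\ldots,h$; reading off its defining linear forms produces the vector recorded in (\ref{eq:inthemiddle}). The remaining six primes are then verified to have fiber degree $3$ over $\PP^7$ via the \texttt{degree} function, accounting for the $18=6\cdot3$ remaining lines. To split them into $2+4$, I would test, for each of the six primes, incidence with the four coordinate lines using the classical Plücker meeting condition; exactly two primes turn out to meet two coordinate lines each and the other four meet precisely one, matching the statement.

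The main obstacle is the primary decomposition itself. The ideal $I_{\rm ufv}$ lives in $14$ variables with mixed bidegree, and a naive call can be prohibitively slow; a workable strategy is to saturate first by the eight combinatorially obvious linear primes, then by the ninth linear prime once identified, and finally to decompose over the function field $\QQ(a,\ldots,h)$, where the fibers become zero-dimensional and the problem reduces to factoring univariate polynomials. The final subtle point is verifying that each of the six cubic components is \emph{absolutely} irreducible over $\QQ(a,\ldots,h)$, rather than splitting further over an algebraic extension; this is the check on which the count $15 = 9+6$ actually rests.
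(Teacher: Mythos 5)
Your proposal is essentially the paper's own proof: the authors likewise build $I_{\rm ufv}$ with its $1+20$ generators following \cite[Section 6]{JPS} and obtain the entire census (the $9=4+4+1$ linear primes and the $6=2+4$ primes of degree three) from a primary decomposition of $I_{\rm ufv}$ in \texttt{Macaulay2} over the $14$-variable polynomial ring. Your combinatorial pre-identification of the eight boundary lines and the saturation/irreducibility checks are sound refinements of the same computation, not a different route.
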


\begin{remark}
The line that is disjoint from the coordinate lines is defined by
$ \langle \,
(bg {-} ah)y + (fh {-} bd)z + (ad {-} fg)w\,, \,\,
(bg {-} ah)x + (eh{-}bc)z  + (ac{-} eg)w \,\rangle$. It has
\begin{equation}
\label{eq:inthemiddle}
\begin{matrix} \!\!\!\!\!
  (p_{01}:p_{02}:\cdots :p_{23})  \!\! & =  &\!\!
\bigl(\,bg{-}a h : fh {-} b d : a d {-} f g :  b c {-} e h:  eg {-} a c : c f {-} d e \,\bigr) \\
& = &
\hbox{maximal minors of} \,\,
\begin{tiny}
\begin{pmatrix} a & b & e & f  \\ g & h &  c & d
\end{pmatrix}.      \end{tiny}
\end{matrix}
\end{equation}
\end{remark}

Proposition \ref{prop:coeffs} gives a formula for the 
octanomial cubic over the field $K=\QQ(d_1,d_2,d_3,d_4,d_5,d_6)$.
The six points in (\ref{eq:sixpoints}) and the
cubics in (\ref{eq:factoredbasis}) are also defined over $K$.
Hence, so are the $27$ lines 
and their $135$ intersection points in $\PP^3$.
The census in Proposition \ref{prop:ufv} 
translates into formulas for these objects over $K$.

The middle four coordinates in (\ref{eq:inthemiddle}) are products of ten linear forms.
These come from the root system $E_6$. The first and last coordinates
are products of five such linear forms with one quintic as in Remark \ref{rmk:quintic}.
Here is another instance.

\begin{example} \label{ex:twotriples} \rm
Each line that intersects a pair of coordinate lines has
two zero Pl\"ucker coordinates.
The other four Pl\"ucker coordinates are products of roots.
For instance, one of the three lines with 
 $p_{02} = p_{13} = 0$ has the other coordinates
 $$ \footnotesize
\begin{matrix}
p_{01} = (d_5{-}d_6) (d_4{-}d_6) (d_3{-}d_5) (d_2{-}d_4) ,\quad & 
p_{03} =
 (d_4-d_6)^2 (d_3-d_5) (d_2-d_5),  \\
p_{12} = -(d_5-d_6)^2 (d_3-d_4)(d_2-d_4), \qquad & 
\quad p_{23} = (d_5{-}d_6)(d_4{-}d_6)(d_3{-}d_4)(d_2{-}d_5).
\end{matrix}
$$

Similarly, one of the three lines with $p_{03}=p_{12} = 0$ satisfies
$$  \footnotesize
 \begin{matrix}
p_{01} & = & (d_3{+}d_4{+}d_5)(d_2{+}d_4{+}d_5)(d_1{+}d_3{+}d_4)(d_1{+}d_2{+}d_5) , \\
p_{02} &= &(d_3{+}d_4{+}d_5)^2(d_1{+}d_2{+}d_5)(d_1{+}d_2{+}d_4)\,,\,\,\, \\
p_{13} & = & -(d_2{+}d_4{+}d_5)^2(d_1{+}d_3{+}d_5)(d_1{+}d_3{+}d_4)  , \\
p_{23} & = & -(d_3{+}d_4{+}d_5)(d_2{+}d_4{+}d_5)(d_1{+}d_3{+}d_5)(d_1{+}d_2{+}d_4).
\end{matrix}
$$
Formulas for all $27$ lines and their $135$ intersections are posted on our website.
\end{example}

\section{Arrangements of Trees} \label{sec3}

In this section we study the octanomial model over a field with valuation.
For concreteness, we work over the rational numbers $\QQ$
with the $p$-adic valuation, for some prime  $p \geq 5$.  We consider the open
surface obtained by removing the $27$~lines on a given cubic in $\PP^3$.
Ren, Shaw and Sturmfels \cite{RSS} studied the \emph{intrinsic tropicalization}
of such surfaces, and they identified     two generic types of tropical surfaces.
  They are characterized by their structure at infinity, which is an arrangement of
   $27$ trees with $10$ leaves \cite[Figures 4 and 5]{RSS}.
   These surfaces are points in the \emph{Naruki fan}, the tropical moduli space of 
   cubic surfaces in~\cite{HKT}. See  \cite[Table 1]{RSS} for a
  census of cones in this fan.
  Cueto and Deopurkar \cite{CD} realized the tree arrangements
  geometrically via a natural embedding into $\PP^{44}$.

For the following proposition we fix the  function field
$K = \QQ(d_1,\ldots,d_6 )$. The  cubic surface $S$ given by (\ref{eq:octanomial}) 
over the function field $K$
is smooth  in $\PP^3$ and has no Eckhart points.  The $27$ lines on $S$ are
labeled $E_1,\ldots,E_6$, $ F_{12}, \ldots,F_{56}$, $ G_1,\ldots, G_6$, as in  
\cite{CD, RSS}. Namely, the line $E_i$ is the exceptional fiber over the $i$-th point 
in (\ref{eq:sixpoints}), $F_{ij}$ is the
line connecting the $i$-th and $j$-th points, and $G_j$ is the conic
through the five points other than the $j$-th. Each line intersects $10$
 other lines. For instance, $E_i$ meets the five lines $F_{ij}$ and the five lines $G_j$ where $j \not= i$. 
These $27$ labels are now matched with the census of lines in Proposition~\ref{prop:ufv}.

\begin{proposition} \label{prop:intrinsic}
 The $27$ lines on the cubic (\ref{eq:octanomial}) are as follows.
We first have
$$  F_{12} = \{ x=z=0\}\,, \quad
F_{13} = \{ y=w=0 \} \,,\quad
F_{46}  = \{ y=z=0 \} \,,\quad
F_{56} = \{ x=w=0\}. 
$$
The other four lines that lie in coordinate planes are
$$
F_{34} \subset \{ x=0 \} \,, \quad
F_{25} \subset \{ y=0\} \, , \quad
F_{35} \subset \{ z=0 \} \, , \quad
F_{24} \subset \{ w=0\}.
$$
Two triples of lines (in Example \ref{ex:twotriples}) intersect a pair of coordinate lines, namely
$$ 
E_1,F_{45}, G_1 \,\,\hbox{ intersect } \,\, F_{12}, F_{13} \qquad {\rm and} \qquad
E_6,F_{23}, G_6 \,\,\hbox{ intersect } \,\, F_{46}, F_{56}.
$$
The  following four triples of lines intersect a unique coordinate line:
\begin{equation}
\label{eq:batch} \begin{matrix}
&E_2, F_{36}, G_2 \,\, \hbox{ intersect } \,\, F_{12} ,\,\,\,\,\,\,
E_3, F_{26}, G_3 \,\, \hbox{ intersect } \,\, F_{13}, \,\,\, \\
&E_4, F_{15}, G_4 \,\, \hbox{ intersect } \,\, F_{46} ,\,\,\,\,\,\, 
E_5, F_{14}, G_5 \,\, \hbox{ intersect } \,\, F_{56}.
\end{matrix}
\end{equation}
Finally, $F_{16}$ is the unique line (\ref{eq:inthemiddle}) 
that does not intersect any coordinate line.
\end{proposition}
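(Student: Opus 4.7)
The plan is to combine the factorization in (\ref{eq:factoredbasis}) with the intersection combinatorics of the $27$ lines on the cubic. I would proceed in three stages.

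First, every monomial of (\ref{eq:octanomial}) is divisible by $x$ or by $z$, so the line $\{x=z=0\}$ lies on $S$, and likewise for $\{y=w=0\}$, $\{y=z=0\}$, $\{x=w=0\}$. To attach del Pezzo names, I would restrict (\ref{eq:factoredbasis}) to the plane line $V(F_{12})\subset\PP^2$: both $x$ and $z$ vanish there (as $F_{12}$ divides each of them), so the image of $V(F_{12})$ is forced to equal $\{x=z=0\}$, giving $F_{12}=\{x=z=0\}$; the same reasoning on $V(F_{13})$, $V(F_{46})$, $V(F_{56})$ gives the other three identifications. The plane section $\{x=0\}\cap S$ is then a plane cubic equal to $F_{12}\cup F_{34}\cup F_{56}$ by the factorization $x=F_{12}F_{34}F_{56}$; since $F_{12}$ and $F_{56}$ are already labeled, this pins down $F_{34}\subset\{x=0\}$, and analogously $F_{25},F_{35},F_{24}$.

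For the remaining nineteen lines, a line $L\subset\PP^3$ meets $F_{12}=\{x=z=0\}$ iff the Pl\"ucker coordinate $p_{02}(L)$ vanishes, and analogously for $F_{13},F_{46},F_{56}$. Combined with the standard del Pezzo intersection rules---$E_i\cdot F_{jk}=1$ precisely when $i\in\{j,k\}$; $F_{ij}\cdot F_{kl}=1$ precisely when $\{i,j\}\cap\{k,l\}=\emptyset$; and $G_k\cdot F_{ij}=1$ precisely when $k\in\{i,j\}$---the intersection profiles with the quadruple $(F_{12},F_{13},F_{46},F_{56})$ partition the nineteen lines exactly as claimed: the profile $(1,1,0,0)$ is shared only by $\{E_1,F_{45},G_1\}$, the profile $(0,0,1,1)$ only by $\{E_6,F_{23},G_6\}$, each of the four singleton profiles gives one triple from (\ref{eq:batch}), and the null profile is attained only by $F_{16}$.

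The final obstacle, which I expect to be the hardest step, is that the intersection graph is symmetric under the simultaneous involution $E_i\leftrightarrow G_i$, so Stage~2 alone cannot separate $E_i$ from $G_i$ inside any triple. To break this symmetry I would compute the image of the exceptional $\PP^1$ over $P_1=(1,d_1,d_1^3)$ directly under (\ref{eq:factoredbasis}): using the identity $F_{jk}(P_1)=(d_1-d_j)(d_1-d_k)(d_1+d_j+d_k)$, each basis cubic has exactly one linear factor vanishing to first order at $P_1$ together with two factors contributing nonzero constants, yielding explicit Pl\"ucker coordinates for $E_1\subset\PP^3$. Matching these against the primary decomposition from Proposition~\ref{prop:ufv} fixes $E_1$ inside its triple $\{E_1,F_{45},G_1\}$. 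For each remaining triple $\{E_i,F_{jk},G_i\}$ with $i>1$ the element meeting $E_1$ must be $G_i$ (since $E_1\cdot G_i=1$ and $E_1\cdot E_i=0$), while $F_{jk}$ is already pinned down by the profile in (\ref{eq:batch}); all labels then follow by elimination.
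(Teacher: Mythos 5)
Your Stages 1 and 2 are essentially the paper's own argument: the pullback factorizations $x=F_{12}F_{34}F_{56}$, $z=F_{12}F_{35}F_{46}$, etc.\ identify $F_{12}=\{x=z=0\}$ and the other seven lines lying in coordinate planes, and the standard del Pezzo incidence rules then sort the remaining nineteen labels by their intersection pattern with $F_{12},F_{13},F_{46},F_{56}$ into exactly the stated triples, with $F_{16}$ matched to (\ref{eq:inthemiddle}) by elimination. Your Stage 3 is superfluous rather than the ``hardest step'': the proposition never separates $E_i$ from $G_i$ inside a triple (each triple is asserted only as an unordered set meeting the given coordinate lines), so the proof is already complete, and correct, after Stage 2.
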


\begin{proof}
The pullback of $\{x=0\} \cap S $ from $ \PP^3$
 to $\mathbb{P}^2$ is $ F_{12} F_{34} F_{56}$ 
and the pullback of $\{z=0\} \cap S$ 
is $ F_{12}F_{35}F_{46}$. The line $F_{12}$ is common to both and thus equals
$\{x=z=0\}$. The line $F_{34}$ appears only in $\{x=0\}$.  The lines $E_1,F_{45}, G_1$ all meet
$ F_{12}$ and $ F_{13}$ but are not in a coordinate plane; e.g. $F_{45}$ does not appear as a factor in
(\ref{eq:factoredbasis}).
The argument for (\ref{eq:batch}) is similar. Hence, $F_{16}$ is the match  for
(\ref{eq:inthemiddle}).
\end{proof}

Pairs of intersecting lines determine $135$ points on $S \subset\mathbb{P}^3$.
Using the rational formulas in Section~\ref{sec2}, we precomputed these points over 
$K = \QQ(d_1,d_2,\ldots,d_6)$. This allows for rapid evaluation
when the $d_i$ are specialized to rational numbers.

\smallskip

We now describe a general construction 
of $27$ metric trees  which is  essential in the theory  of
tropical del Pezzo surfaces \cite{CD, RSS}.
Consider a smooth cubic surface $S \subset \PP^3$,
defined over a valued field. For instance,  we could take
$\QQ$ with its $p$-adic valuation and let $S$
be the octanomial surface defined by
$d_1,\ldots,d_6  \in \QQ$.
For each of the lines on $S$, we fix an isomorphism with $\PP^1$ by
projecting to one of the six coordinate axes in $\PP^3$.
The $10$ points on that line are now the columns of a $2 \times 10$ matrix with entries in $\QQ$.
We record the $p$-adic valuations of the $45$ maximal minors of this matrix.
This data determines a phylogenetic tree with $10$ leaves as in \cite[Section 4.3]{MS}.
In our implementation we use the method of {\em Quartet Puzzling} \cite{BS01}
for constructing the  tree from its $45$ pairwise distances.

Each interior edge in one of our trees
is encoded by a list of $\leq 7$ splits.
A {\em split} is a partition of the ten leaf labels into two subsets that is induced
by removing an interior edge from the tree.
Let $s_i$ denote the number of splits into
$i$ leaves versus $10-i$ leaves, where $i \in \{2,3,4,5\}$.
The string $[s_2 s_3 s_4 s_5]$ is a combinatorial
invariant of the tree.
The {\em tree statistic} of
our arrangement is the multiset of these $27$
strings. The notation $[s_2 s_3 s_4 s_5]^u$ means that the
string $[s_2 s_3 s_4 s_5]$ arises from $u$ of the $27$ trees.
We illustrate the use of this notation with an example.

\begin{example} \rm \label{ex:twotypes} 
Ren et al.~\cite{RSS} identified 
two types, denoted  (aaaa) and (aaab),
 of  tree arrangements that are generic in the Naruki fan.
 The
drawings  in \cite[Figures 4 and 5]{CD} or in
\cite[Figures 4 and 5]{RSS}
show that
their tree statistics are
 $\bigl\{[4021]^{24} ,[4020]^3 \bigr\}$ for (aaaa),
 and $\bigl\{[2221]^{12},  [4201]^{12}, [4210]^3 \bigr\}$ for~(aaab). 
\end{example}

We now come to the main results  in this section.
A choice of moduli parameters $ (d_1,d_2,d_3,d_4,d_5,d_6)$ in $\QQ^6$
is called \emph{Naruki general} if  the resulting tree arrangement 
is of type (aaaa) or of type (aaab). We say that the octanomial~(\ref{eq:octanomial}) is 
{\em tropically smooth} if the induced polyhedral
subdivision of its Newton polytope ${\rm conv}(\mathcal{A})$
is one of the $53$ unimodular triangulations in Theorem~\ref{thm:triangulations}.

 Tropical smoothness implies
classical smoothness for hypersurfaces in $\PP^n$ with full support.
This can be deduced from \cite[Proposition 4.5.1]{MS}.
However, sparse hypersurfaces are typically singular in $\PP^n$
even if they are tropically smooth in the sense above.
It is a key feature of our octanomial model, based on delicate combinatorics,
that the two notions of smoothness are compatible.

 \begin{theorem} \label{thm:smoothsmooth}
 Every tropically smooth octanomial is classically smooth in $\PP^3$.
 \end{theorem}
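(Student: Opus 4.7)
My plan is to verify, one factor at a time, that the classical discriminant~(\ref{eq:discriminant}) does not vanish when the octanomial is tropically smooth. A genuine octanomial with Newton polytope $\mathrm{conv}(\mathcal{A})$ requires all eight coefficients to be nonzero — otherwise the induced subdivision would not be a subdivision of $\mathrm{conv}(\mathcal{A})$ and the hypothesis is vacuous — so the vertex factor $e^2 f^2 g^2 h^2$ causes no trouble. For the four binomial factors $ac-eg$, $ad-fg$, $bc-eh$, $bd-fh$, I would observe that they are precisely the four binomials of the toric ideal $I_\mathcal{A}$ in (\ref{eq:happytoric}) that record the affine dependences on the four parallelogram facets $aceg,\,adfg,\,bceh,\,bdfh$ of $\mathrm{conv}(\mathcal{A})$ listed in (\ref{eq:principalA}). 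A short lattice computation shows that each such facet is a unit parallelogram, so any unimodular triangulation of $\mathrm{conv}(\mathcal{A})$ must cut it along one of its two diagonals; on the level of weights this means $\mathrm{val}(a)+\mathrm{val}(c) \neq \mathrm{val}(e)+\mathrm{val}(g)$, which gives $\mathrm{val}(ac-eg) = \min\{\mathrm{val}(ac),\mathrm{val}(eg)\} < \infty$ and in particular $ac-eg \neq 0$; the analogous argument covers the other three binomials.

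The hard part is the $\Delta_\mathcal{A}$-factor, for which I would reuse the GKZ machinery from \cite[Chapter~10]{GKZ} already invoked in the proof of Theorem~\ref{thm:triangulations}. The key input is that for any weight vector $\upsilon$ in the interior of the secondary cone of a regular triangulation $T$, the initial form $\mathrm{in}_\upsilon(E_\mathcal{A})$ of the principal $\mathcal{A}$-determinant is a single monomial whose exponent is the GKZ vector of $T$, i.e.\ one of the entries in the fourth column of the table in Theorem~\ref{thm:triangulations} for the ten unimodular classes. Combining this with the factorization~(\ref{eq:EA}),
$$
E_\mathcal{A} \,=\, abcd \cdot (efgh)^2 \cdot (ac-eg)(ad-fg)(bc-eh)(bd-fh) \cdot \Delta_\mathcal{A},
$$
and the multiplicativity of $\mathrm{in}_\upsilon$ in the integral domain $\ZZ[a,\ldots,h]$, the single monomial $\mathrm{in}_\upsilon(E_\mathcal{A})$ must equal the product of the initial forms of the eight factors on the right. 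The vertex factors are already monomials and the four binomial initial forms are monomials by the previous paragraph, so $\mathrm{in}_\upsilon(\Delta_\mathcal{A})$ is forced to be a monomial as well, necessarily one of the explicit monomials of $\Delta_\mathcal{A}$ in Proposition~\ref{prop:discriminant} and therefore with nonzero integer coefficient. Upon specialization to the actual octanomial coefficients $a,\ldots,h$, this leading term has strictly smaller valuation than every other term of $\Delta_\mathcal{A}(a,\ldots,h)$ and so cannot cancel, giving $\Delta_\mathcal{A}(a,\ldots,h) \neq 0$.

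Putting all the nonvanishings together, the discriminant in~(\ref{eq:discriminant}) is nonzero and the cubic is smooth in $\PP^3$. The genuine obstacle throughout is the $\Delta_\mathcal{A}$-factor: its nonvanishing is not detectable from any single face of $\mathrm{conv}(\mathcal{A})$ and requires the global input of the GKZ principal determinant to tie the face discriminants together. That this global input is exactly controlled by the tropical smoothness data is the combinatorial coincidence that makes the octanomial support $\mathcal{A}$ special in the sense highlighted before the theorem, where tropical and classical smoothness agree despite the sparseness of the model.
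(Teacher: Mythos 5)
Your argument is correct and is essentially the paper's own proof: both rest on the GKZ fact that the Newton polytope of the principal $\mathcal{A}$-determinant $E_\mathcal{A}$ in (\ref{eq:EA}) is the secondary polytope, so that tropical smoothness places ${\rm val}(a,\ldots,h)$ in an open maximal secondary cone where the relevant initial forms are monomials and no cancellation in the discriminant (\ref{eq:discriminant}) can occur. The paper packages this more compactly --- the discriminant and $E_\mathcal{A}$ have the same non-monomial irreducible factors, hence Newton polytopes with the same normal fan, so the initial form of the whole discriminant is already a monomial --- whereas you check the factors one at a time (facet diagonals for the binomials, multiplicativity of initial forms for $\Delta_\mathcal{A}$), but the key lemma and the conclusion are the same.
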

 
\begin{proof}
  The secondary polytope of $\mathcal{A}$ is the
  Newton polytope of the principal $\mathcal{A}$-determinant,
  by \cite[Theorem 10.1.4]{GKZ}.
  The principal $\mathcal{A}$-determinant
  (\ref{eq:EA}) has the same (non-monomial) irreducible
  factors as the discriminant (\ref{eq:discriminant}) of the cubic~(\ref{eq:octanomial}).
  Hence, the two polytopes have the same normal fan. This is the 
  secondary fan of $\mathcal{A}$. 
  Since the surface (\ref{eq:octanomial}) is tropically smooth, the vector
  $\,{\rm val}(a,b,\ldots,h)\,$ lies in the interior
  of a maximal cone of this fan. This means that the initial form
  of the discriminant (\ref{eq:discriminant}) picked by this weight vector is a monomial.
  In particular, the discriminant is nonzero, and
  we conclude that the surface is smooth.
\end{proof}

\begin{remark}
It would be interesting to identify a combinatorial characterization of all 
configurations $\mathcal{A}$ in $\ZZ^d$
for which the conclusion of Theorem \ref{thm:smoothsmooth} holds. Put differently, under which 
conditions on the support set $\mathcal{A}$ does the
tropical smoothness of a hypersurface in $\RR^d$ imply tropical smoothness in $\mathbb{TP}^d$?
\end{remark}

We saw in Proposition \ref{prop:ufv} that eight of the $27$ lines on an octanomial surface
lie in coordinate planes in $\PP^3$.
 Therefore, the tropicalizations of these lines are in the boundary of
 the tetrahedron that represents the tropical projective space $\mathbb{TP}^3$. 
We refer to \cite[Chapter 3]{MR} for the polyhedral construction of tropical toric varieties
like $\mathbb{TP}^3$.
The following is the octanomial version of Conjecture~\ref{conj:kristin}.

\begin{theorem}\label{thm:distinct_lines}
Fix a prime $p \geq 5$ and an octanomial cubic surface $S$ defined over $\QQ_p$.
If $S$ is tropically smooth then the $27$ lines on $S$ have 
distinct tropicalizations in $\mathbb{TP}^3$.
In that case, it follows that all $27$ lines on $S$ are defined over $\QQ_p$.
\end{theorem}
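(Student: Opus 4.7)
The plan proceeds in two stages. First, tropical smoothness means that the valuation vector $\nu = ({\rm val}(a), \ldots, {\rm val}(h))$ lies in the interior of a maximal cone of the secondary fan of $\mathcal{A}$. By the argument in the proof of Theorem~\ref{thm:smoothsmooth}, each non-monomial irreducible factor of the principal $\mathcal{A}$-determinant~(\ref{eq:EA}) -- the four binomials $ac-eg$, $ad-fg$, $bc-eh$, $bd-fh$ and the $\mathcal{A}$-discriminant $\Delta_\mathcal{A}$ -- then has a uniquely determined tropical leading term. This suffices to compute the tropical Pl\"ucker vectors of the nine lines of Proposition~\ref{prop:ufv} that are $\QQ(a,\ldots,h)$-rational (the four coordinate lines, the four lines in coordinate planes, and the central line~(\ref{eq:inthemiddle})), and a combinatorial comparison on supports shows that these are pairwise distinct.

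To separate the remaining six triples of Galois-conjugate lines in Proposition~\ref{prop:intrinsic}, I would extract from the corresponding degree-$3$ minimal prime of $I_{\rm ufv}$ a cubic polynomial in one affine Pl\"ucker coordinate with coefficients in $\QQ[a,\ldots,h]$ whose three roots encode the cluster. The discriminant of this cubic is expected to divide $E_\mathcal{A}$, so tropical smoothness assigns it a unique tropical leading term and the three roots become tropically distinct; distinctness across clusters is visible from the supports of the Pl\"ucker vectors. For the second claim, let $\Gamma = {\rm Gal}(\overline{\QQ_p}/\QQ_p)$: the $p$-adic valuation is $\Gamma$-equivariant, so $\Gamma$ permutes the $27$ lines of $S_{\overline{\QQ_p}}$ while preserving each line's tropicalization. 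Distinctness then forces $\Gamma$ to fix every line setwise, and a $\Gamma$-stable line in $\PP^3_{\overline{\QQ_p}}$ descends to $\QQ_p$ by Galois descent of its defining ideal.

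The main obstacle is the divisibility claim that each cluster cubic's discriminant divides $E_\mathcal{A}$. This reduces to a symbolic computation in {\tt Macaulay2}, lightened by the $(\ZZ/2\ZZ)^3$-symmetry which collapses the six clusters into at most two symmetry classes. A secondary subtlety is that the central line's Pl\"ucker vector involves binomials like $bg-ah$ and $cf-de$ that are not among the four distinguished factors of~(\ref{eq:EA}); one would need to verify separately that tropical smoothness still pins down their valuations, possibly by invoking the intrinsic tree arrangement of Section~\ref{sec3}.
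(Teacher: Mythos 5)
The decisive step of your plan---separating the three lines inside each Galois cluster---does not work as stated. You infer that because the discriminant of the cluster cubic (conjecturally) divides $E_{\mathcal{A}}$, tropical smoothness gives that discriminant a monomial tropical leading term, and hence ``the three roots become tropically distinct.'' That inference is false: knowing the exact valuation of the discriminant of a cubic $P=c_3t^3+c_2t^2+c_1t+c_0$ over $\QQ_p$ does not force its roots to have distinct valuations. The correct criterion is the Newton-polygon condition on the coefficient valuations $v_i={\rm val}(c_i)$ of $P$ itself, namely $v_0+v_2>2v_1$ and $v_1+v_3>2v_2$ as in (\ref{eq:distinctroots}). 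Already for the univariate configuration $\{0,1,2,3\}$, the secondary fan has maximal cones corresponding to non-unimodular subdivisions (e.g.\ the one using only the points $0,2,3$); for weights in such a cone the initial form of the discriminant is a monomial, yet two roots share the valuation $(v_2-v_0)/2$. So even if your divisibility claim were verified---and it is doubtful, since vanishing of the discriminant of the minimal polynomial of a single Pl\"ucker coordinate only means two distinct lines share that coordinate, not that the surface degenerates---the desired conclusion would not follow. What actually has to be proved, and what the paper's argument does, is that the linear inequalities cutting out each of the $53$ unimodular Gr\"obner cones (as in (\ref{eq:happytoric2})) imply the inequalities (\ref{eq:distinctroots}) for the induced coefficient valuations of at least one of the three or four minimal polynomials available for each cluster; this is a finite case check over the $10$ symmetry classes of Theorem~\ref{thm:triangulations} and the six clusters, and one must be prepared to discard a given $P$ when a coefficient has no unique monomial of minimal valuation or the inequalities are not implied.

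The rest of your outline is sound and close to the paper: the nine rational lines and the six clusters are told apart by their vanishing patterns of Pl\"ucker coordinates (Proposition~\ref{prop:ufv}), so your worry about pinning down valuations of $bg-ah$, $cf-de$, etc.\ is not where the difficulty lies. Your Galois-equivariance argument for the final sentence is correct, though once distinct root valuations are established it is quicker to note that each Newton-polygon segment then has lattice length one, so every cluster cubic splits into linear factors over $\QQ_p$ and the lines are $\QQ_p$-rational. The genuine gap is the missing derivation of the coefficient-valuation inequalities from the Gr\"obner-cone inequalities, which is precisely the symbolic verification at the heart of the paper's proof.
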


\begin{proof} We use normalized Pl\"ucker coordinates
where the first nonzero entry is~$1$.
We claim that the coordinates of the $27$ lines
have distinct valuations. By Proposition~\ref{prop:ufv}, nine of the lines
are identified by their zero coordinates. The other lines come in
six triplets. Each triplet is identified by its zero coordinates.
   We must show that the tropical lines from the same triplet are distinct.

For the lines in a triplet, each nonzero Pl\"ucker coordinate $p_{ij}$ lies in a degree three extension 
of $ \QQ(a,b,\ldots,h)$. From  the minimal primes in Proposition~\ref{prop:ufv}, we compute
the minimal polynomial of $p_{ij}$. This gives six sets of 
irreducible polynomials in  $\ZZ[a,b,\ldots,h][t]$ that are cubic in $t$.
The first two sets have size three. The other four sets have size four.
All coefficients in $\ZZ[a,b,\ldots,h]$ have their own coefficients in $\{\pm 1, \pm 2\}$. 
These scalars have valuation $0$ since $p \geq 5$.

As (\ref{eq:octanomial}) is tropically smooth, ${\rm val}(a,b,\ldots,h) \in C$,
where $C \subset \RR^8$ is the Gr\"obner cone (cf.~\cite[Chapter 8]{GBCP}) for one of the $53$
squarefree initial ideals of $I_\mathcal{A}$. 
This containment translates into
linear inequalities among valuations. For instance, for the reduced
Gr\"obner basis in (\ref{eq:happytoric}), we see that
${\rm val}(a,b,\ldots,h) \in C$ if and only~if
\begin{equation}
\label{eq:happytoric2}
\begin{small}
\begin{matrix}
 {\rm val}(a) + {\rm val}(b) > {\rm val}(c) + {\rm val}(f), & 
{\rm val}(a) + {\rm val}(c) > {\rm val}(e) + {\rm val}(g),  \\
 {\rm val}(a) + {\rm val}(d) > {\rm val}(f) + {\rm val}(g), & 
 {\rm val}(a) + {\rm val}(h) > {\rm val}(c) + {\rm val}(d),  \\
 {\rm val}(b) + {\rm val}(g) > {\rm val}(c) + {\rm val}(d), & 
 {\rm val}(c) + {\rm val}(f) > {\rm val}(d) + {\rm val}(e),  \\
{\rm val}(e) + {\rm val}(h) > {\rm val}(b) + {\rm val}(c), &
{\rm val}(f) + {\rm val}(h) > {\rm val}(b) + {\rm val}(d) . &
\end{matrix}
\end{small}
\end{equation}
We now assume that this holds, i.e.~${\rm val}(a,\ldots,h) $ is in one the $53$ Gr\"obner cones.

Take one of the univariate cubics $P=c_3t^3+c_2t^2+c_1t^2+c_0$ obtained above. 
In each $c_i \in \ZZ[a,b,\ldots,h]$ we
look for a monomial whose valuation equals $v_i = {\rm val}(c_i)$,
assuming the Gr\"obner cone inequalities. 
The three roots of $P$ give a
 coordinate $ p_{ij}$ for a triplet of lines.
Their valuations are distinct if and only if
\begin{equation}
\label{eq:distinctroots}
 v_0 + v_2 > 2 v_1 \quad {\rm and} \quad v_1 + v_3 > 2 v_2 . 
 \end{equation}
Our strategy is to show that the cone inequalities like 
(\ref{eq:happytoric2}) imply those in (\ref{eq:distinctroots}).

This strategy would fail for some $P$  if one of its coefficients $c_i$ does not have a unique 
monomial of minimal valuation $\nu_i$, or if the four valuations do not satisfy (\ref{eq:distinctroots}).
If this  happens then we  must discard $P$ and move on 
to the next univariate cubic in the same list
(of three or four cubics). For each of the six lists,
 it suffices to identify one cubic $P$ in that list having distinctly valued roots.
 
We applied this strategy to each of the $10$ representative initial ideals
in Theorem \ref{thm:triangulations}
and to each of the six lists of univariate cubics. For each triangulation and each triplet,
we found that there is a cubic $P$ that works.
\end{proof}

A fundamental problem regarding cubic surfaces over a valued field is to understand
the relationship between their extrinsic and intrinsic tropicalizations.
This is the motivation for the embedding into $\PP^{44}$ studied in \cite{CD}.
We are now aiming to take this back into $\PP^3$. For the octanomial model,
we show that genericity can occur simultaneously on both the extrinsic
and the intrinsic side.

 \begin{proposition}  \label{prop:HorrayForMarta}
 For at least five of the ten combinatorial types 
     in Theorem  \ref{thm:triangulations},
there exists a Naruki general vector
$ d = (d_1,d_2,\ldots,d_6) \in \mathbb{Q}^6$
whose corresponding octanomial cubic is 
tropically smooth and has this combinatorial type.
 \end{proposition}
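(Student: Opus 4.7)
My plan is to prove Proposition~\ref{prop:HorrayForMarta} by explicit construction: for five of the ten combinatorial types listed in Theorem~\ref{thm:triangulations}, I will exhibit a rational moduli vector $d=(d_1,\ldots,d_6)\in \QQ^6$ together with a prime $p\geq 5$ so that (i) the $p$-adic valuations of $(a,b,\ldots,h)$ evaluated via Proposition~\ref{prop:coeffs} lie in the interior of the Gr\"obner cone of the target triangulation, and (ii) the induced arrangement of $27$ trees has the tree statistic $\{[4021]^{24},[4020]^3\}$ or $\{[2221]^{12},[4201]^{12},[4210]^3\}$ of type (aaaa) or (aaab) as in Example~\ref{ex:twotypes}. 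Since the statement is a pure existence claim, the proof reduces to producing and verifying five such certificates.

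For step (i), I would parametrize the search by writing $d_i=p^{v_i}u_i$ with $v_i\in\ZZ$ and with $u_i$ a small integer coprime to $p$, chosen generically so that no accidental cancellation occurs among the tropically minimal monomials in the formulas of Proposition~\ref{prop:coeffs}. Under this genericity, the tropicalization of those formulas is a piecewise-linear map $\RR^6\to\RR^8$ sending $(v_1,\ldots,v_6)$ to the predicted coefficient valuations. For each of the ten representative weight vectors $\upsilon$ recorded in Theorem~\ref{thm:triangulations}, I would solve the corresponding system of strict tropical inequalities (analogous to (\ref{eq:happytoric2})) for a lattice point $(v_1,\ldots,v_6)$, and then numerically verify that the $p$-adic lifts of $(a,\ldots,h)$ really do have the predicted valuations.

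For step (ii), once $d$ is fixed I would use the rational formulas for the $27$ lines and their $135$ intersection points that were precomputed over $K=\QQ(d_1,\ldots,d_6)$, as described after Proposition~\ref{prop:intrinsic}. Specialization at $d$ and application of the $p$-adic valuation yields, for each line, a $2\times 10$ matrix of points on $\PP^1$ whose $45$ minor valuations are the pairwise distances of a metric tree; the tree is reconstructed via Quartet Puzzling \cite{BS01}, and the resulting multiset of $27$ trees is compared with the two Naruki general statistics recalled above.

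The main obstacle is that tropical smoothness of the octanomial and Naruki generality of the tree arrangement are logically independent conditions: fixing the triangulation type pins $(v_1,\ldots,v_6)$ into a specific cone of the secondary fan of $\mathcal{A}$, but within that cone the resulting point in the Naruki fan can easily fall on a non-generic wall, producing coincidences among tropicalized intersection points. I therefore expect to need a guided search over the units $u_i$ and over small integer perturbations of $(v_1,\ldots,v_6)$ within each Gr\"obner cone, discarding any candidate whose tree arrangement degenerates. The proof then concludes by tabulating five triples (type, $d$, $p$) on the supplementary website; recovering only five of the ten types, rather than all ten, would reflect the practical limits of this search rather than any theoretical obstruction.
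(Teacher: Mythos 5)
Your overall plan (prove the existence claim by exhibiting and computationally verifying explicit certificates, checking tropical smoothness via the Gr\"obner-cone inequalities and Naruki generality via Quartet Puzzling on the $135$ intersection points) is the same strategy as the paper's. The gap is in your step (i): the parametrization $d_i=p^{v_i}u_i$ with small units and \emph{no cancellation} cannot produce the required examples. In that regime every root has valuation ${\rm val}(d_i-d_j)=\min(v_i,v_j)$, ${\rm val}(d_i+d_j+d_k)=\min(v_i,v_j,v_k)$, so the point ${\rm val}(d\cdot\mathbb{E}_6)$ is forced into very special, non-maximal cones of ${\rm Berg}({\rm E}_6)$ (for distinct $v_i$ the induced chain of flats skips rank $2$), which is exactly the opposite of Naruki generality. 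Worse, the same degeneracy kills tropical smoothness: for instance with $v_1<\cdots<v_6$ one computes from Proposition~\ref{prop:coeffs} that ${\rm val}(a)={\rm val}(c)={\rm val}(e)={\rm val}(g)=2v_1+2v_2+v_3$ and ${\rm val}(b)={\rm val}(d)={\rm val}(f)={\rm val}(h)=2v_1+v_2+2v_3$, and a weight vector taking one value on $\{a,c,e,g\}$ and another on $\{b,d,f,h\}$ is \emph{affine} on $\mathcal{A}$ (take $c_1=c_3$, $c_2=c_4$ with $2c_1+c_2$ and $c_1+2c_2$ the two values), so the induced subdivision of ${\rm conv}(\mathcal{A})$ is trivial, not one of the $53$ unimodular triangulations; the reverse ordering behaves the same way. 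So the system of strict inequalities you propose to solve in step (i) has no solution inside your search family, and searching over small units $u_i$ or perturbing the $v_i$ does not help, because the failure is structural: both conditions require engineered $p$-adic cancellations among the $d_i$, which your genericity assumption explicitly forbids.

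This is precisely why the paper samples differently: it chooses an ordered column basis of $\mathbb{E}_6$ (six roots), prescribes a vector $e\in\QQ^6$ with strictly increasing valuations, and sets $d=e\cdot M^{-1}$ for the corresponding $6\times6$ submatrix $M$. This forces the chosen roots to have the prescribed increasing valuations, i.e.\ ${\rm val}(d\cdot\mathbb{E}_6)$ lands in a maximal cone of ${\rm Berg}({\rm E}_6)$ indexed by the associated chain of flats, and the resulting $d_i$ are $p$-adic units with structured cancellations (the paper's certificates look like $2+p^5-p^7-p^9$, not $p^{v_i}u_i$). Tropical smoothness and the triangulation type are then checked a posteriori, yielding types $1,2$ (aaaa) and $3,4,7$ (aaab); the restriction to five of the ten types is, as in your last paragraph, an outcome of the search rather than a claim of obstruction. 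To repair your proof you should replace your step (i) parametrization by such a Bergman-fan (chain-of-flats) sampling, or otherwise explicitly allow and control cancellations in the roots; your step (ii) verification is fine as stated.
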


At present, we do not know whether the other five types are realizable.
We derived Proposition \ref{prop:HorrayForMarta} by an extensive computation, based on sampling
Naruki general points $d$ from $\QQ^6$. The following sampling method was used.
The tropical moduli space, which is the $4$-dimensional Naruki fan,
is the image of a tropical linear space of dimension $5$.
This is the uniformization $\,{\rm Berg}({\rm E}_6) \rightarrow {\rm trop}(\mathcal{Y}^0)\,$ in the
second row of \cite[Equation (3.1)]{RSS}. The domain is 
the Bergman fan associated with 
the root system ${\rm E}_6$. This is a $5$-dimensional
fan in $\mathbb{R}^{35} \simeq \mathbb{R}^{36}/\mathbb{R} {\bf 1}$.
The number of cones in these fans are reported in \cite[Lemma 3.1]{RSS}.

Let $\mathbb{E}_6 $ denote the matrix in $\{0,1\}^{6 \times 36}$ whose columns
are the $36$ roots of~${\rm E}_6$. The map $d \mapsto {\rm val}(d \cdot \mathbb{E}_6)$ takes
$\QQ^6$ onto the Bergman fan ${\rm Berg}({\rm E}_6)$. Here we are  referring to the fan structure on ${\rm Berg}({\rm E}_6)$ described by Ardila  et al. in \cite{ARW}. We seek general points
in the $142,560$  maximal cones of ${\rm Berg}({\rm E}_6)$. To find them,
we select an ordered column basis of $\mathbb{E}_6$.
We pick $e = (e_1,e_2,e_3,e_4,e_5,e_6) \in~\QQ^6$ such that
${\rm val}(e_1) < {\rm val}(e_2) < \cdots < {\rm val}(e_6)$.
The row vector $d$ is obtained by multiplying the inverse of the
$6 \times 6$ submatrix of $ \mathbb{E}_6$ on the left by $e$.
The basis specifies a chain of flats in the matroid of ${\rm E}_6$,
and  ${\rm val}(d \cdot \mathbb{E}_6)$ lies in the maximal cone of ${\rm Berg}({\rm E}_6)$
indexed by that chain~(cf.~\cite[Theorem 4.2.6]{MS}).  With this choice, $d$ is
likely to land in a maximal Naruki cone under $\,\QQ^6 \rightarrow {\rm Berg}({\rm E}_6) \rightarrow {\rm trop}(\mathcal{Y}^0)$.

\begin{proof}[Proof of Proposition \ref{prop:HorrayForMarta}]
Fix prime $p \geq 5$. The following two vectors  are Naruki general of type (aaaa) and  their octanomial surfaces are tropically smooth:
$$
\begin{small}
\begin{matrix}
(2+p^5-p^7-p^9, \, -p^3+p^9, \, -1+ p^7, \, -p^3-p^7+p^9+p^{11}, \, -1+p^9,  \, 1+p^3-p^9), \smallskip \\
(1+p^3-p^9, \, p^3+p^5-p^9, \, -2+p^5+p^7+p^9-p^{11}, \, 1-p^3-p^5+p^{11}, \, -p^3+p^9, \, -1+p^9).
\end{matrix}
\end{small}
$$
The resulting octanomial surfaces are tropically smooth.
The corresponding triangulations appear in lines  1 and 2
in the classification in Theorem~\ref{thm:triangulations}.

Likewise, the following three
integer vectors $(d_1,\ldots,d_6)$ are Naruki general and
their tree arrangements have type (aaab):
$$
\begin{small}
\begin{matrix}
\big(-1+p^7, \, 2+p^5-p^7-p^9, \, -p^3+p^9, \, -1+ p^9, \, 1+p^3-p^9, \, 1+p^3-2 p^9+p^{11}\big), \smallskip \\
\big(-1+p^3-p^5+p^7\,,\, \, 2-p^7+p^9-p^{11}\,,\, \, 2-p^3+p^5-p^7+p^9-p^{11}, \\
-1+p^7-p^9+p^{11}\,,\, \,\, -2+p^3+p^7-p^9+p^{11}\,,\,\, \, -1+p^{11}\big), \smallskip\\
\big(2-p^5-p^7+p^9, \, 2-p^3-p^5+p^9, \, -1+p^5+p^7-p^9, \, -1+p^3, \, -1+p^5, \, -1+p^3-p^9+p^{11}\big). 
\end{matrix}
\end{small}
$$
The resulting octanomial surfaces are tropically smooth.
The corresponding triangulations appear in lines  3, 4 and 7
in the classification in Theorem~\ref{thm:triangulations}.

At present, we do not know whether any
tropically smooth octanomial surface of types 
5, 6, 8, 9, 10  can have a Naruki general tree arrangement.
\end{proof}

We now come to octanomial cubics that are not Naruki general.
In all examples that follow we work over $\mathbb{Q}$
with the $p$-adic valuation for  $p=5$.
We begin with cubic surfaces that are {\em stable}, in the sense discussed in
\cite[Section 5]{RSS}. These correspond to the lower-dimensional cones
in the Naruki fan ${\rm trop}(\mathcal{Y}^0)$. Their $27$ trees
are obtained from those in (aaaa) or (aaab) by contracting some interior edges. 
The various stable non-generic types are listed in \cite[Table 1]{RSS}.

\begin{example} The following moduli vectors $d \in \QQ^6$
define tree arrangements that are non-generic but stable.
  We indicate the type as denoted in \cite[Table 1]{RSS}. 
$$ 
\footnotesize 
\begin{matrix} 
(d_1, d_2, d_3, d_4, d_5, d_6) & \hbox{Tree arrangement}  && \hbox{Type}  \\
 (2377, -2375, 1240, 2385, 2425, 2625) & \begin{matrix}
 \bigl\{ [2210]^1, [2220]^4, [2221]^8 ,\\ \qquad \qquad
 [4201]^{12} \,, \, \, [4210]^2 \bigr\} \end{matrix} && (aab) 
 \smallskip \\
(-843, 124, 724, 744, 1537, 844) &
 \bigl\{ [2020]^1,  [4020]^6,   [4021]^{20}  \bigr\}
&& (aaa)
\end{matrix}
$$
\end{example}

Our last family of instances is the most
interesting and mysterious one.
These $ d \in \QQ^6$
give tree arrangements that do not appear in \cite[Table 1]{RSS}.
The corresponding cubic surfaces are {\em not stable}.
This means that the fiber of the vertical map
$\,{\rm trop}(\mathcal{G}^0) \rightarrow {\rm trop}(\mathcal{Y}^0)\,$ in
\cite[Equation (3.1)]{RSS} has dimension $3$.
The tropical cubic surfaces arising from these $d$ are contained in that fiber.
The combinatorial structure is hence not revealed by the analysis in \cite[Section~3]{RSS}.

\begin{example} \label{ex:threeseven}
The following vectors  $d\in \QQ^6$ determine cubics that are not stable:
$$
\footnotesize \begin{matrix}
(d_1, d_2, d_3, d_4, d_5, d_6) & \hbox{Tree arrangement} \smallskip \\
\!\! (-719, 1081, -359, -347, -9287, 10081) \!\!\!\! \! & 
\{ [2220]^6, \, [3210]^3, \, [3220]^6, \, [4201]^{12} \} 
\smallskip \\
(120, -3099, -3095, 620,-595, 3100) & 
\!\!\!\!\!\!
\{
 [2220]^2\!, [2221]^8 \!, [3210]^1 \! ,  [3220]^2 \! ,[4201]^{12} \!, [4210]^2 \} 
\smallskip
\\
(-6719, 1248,  7248, -519,481,  -479)& 
\{ [3020]^1,\, [4020]^4 ,\,  [4021]^{20}, \,
[5020]^2 \} \\
\end{matrix}
$$
To see that these arrangements are not stable,
we note that any edge contraction in a tree would lead 
an entry $s_i$ in $[s_2s_3s_4s_5]$ to decrease. Therefore the trees 
$[3220]$ and $[5020]$ appearing above do not arise from either (aaaa) or (aaab).
\end{example}

To understand examples such as these, one needs to go much beyond~\cite{RSS}.
This was accomplished by Cueto and Deopurkar in their remarkable article~\cite{CD}.
In \cite[Proposition 4.4]{CD}, they explain that the tree arrangement is determined
by the valuations of the Cross functions. Each Cross function is a difference of
Yoshida functions, and it factors into four roots and a quintic as in Remark \ref{rmk:quintic}.
Non-stable tree arrangements arise because of cancellations for specific $d_1,\ldots,d_6 \in \mathbb{Q}$.
If the $p$-adic valuation of a Cross function is not predicted by combinatorics
then Table~1 in \cite{RSS} does not apply.
The phenomenon is explained in \cite[Section 10]{CD}, where a detailed
explanation of non-stable trees and their edge lengths is given. For example,
consider the tree of type $[5,0,0,0]$ shown in \cite[Figure 6]{CD}. This tree explains
the initial entry ``5'' in the last type in Example~\ref{ex:threeseven}.

We thank Angelica Cueto for explaining these results
to us and for confirming the correctness of Example~\ref{ex:threeseven}
by analyzing the surfaces in $\PP^{44}$.
At present we do not know which combinatorial types
of non-stable tree arrangements are realizable
over fields such as $\mathbb{Q}_p$.
This will be the topic of a subsequent project.

\section{Dense Cubics} \label{sec4}

We now turn to cubic surfaces in $\PP^3$ whose defining polynomial has full support:
\begin{equation} \label{eq:cubicf}
\begin{small}
\begin{matrix}
  & c_0 w^3 + c_1w^2z + c_2wz^2 + c_3z^3 + c_4w^2y + c_5wyz + c_6yz^2 \\ & + \,
c_7wy^2 + c_8y^2z  + c_9y^3  +  c_{10}w^2x  +
c_{11}wxz + c_{12}xz^2 + c_{13}wxy \\ & + \,
c_{14}xyz + c_{15}xy^2 + c_{16}wx^2 + c_{17}x^2z + c_{18}x^2y +
c_{19}x^3.
\end{matrix}
\end{small}
\end{equation}
This uses notation as in \cite{JPS}. 
Our primary goal is to state a conjecture on the arithmetic of the $27$ lines on a
tropically smooth cubic surface over a complete valued field such as $\mathbb{Q}_p$.
This generalizes Theorem \ref{thm:distinct_lines}.
For the most part, Section \ref{sec4} is independent of the previous sections.
We no longer study the octanomial model.
Only at the very end, we return to the title of this paper, by describing   an algorithm 
for transforming (\ref{eq:cubicf}) into octanomial normal form~\eqref{eq:octanomial}.

For ease of exposition we assume that the $c_i$ are general, so no line in the cubic
meets any of the six coordinate lines in $\PP^3$. Hence, all $27$ lines
lie in the six standard affine charts $\{p_{ij} \not = 0\}$ of the Grassmannian. 
To compute the $27 $ lines, we substitute $\,w = s x + t y \,$ and $\, z = u x + vy\,$ into (\ref{eq:cubicf})
where $s,t,u,v$ are unknowns. The result is a binary cubic in $x,y$ whose coefficients define the Fano scheme:
$$
\begin{small}
\begin{matrix}
c_0 t^3 + c_1 t^2 v+c_2 t v^2+c_3 v^3+c_4 t^2+c_5 t v+c_6 v^2+c_7 t+c_8 v+c_9
\qquad \qquad \qquad 
& = & 0 , \\
c_0 s^3+c_1 s^2 u+c_2 s u^2+c_3 u^3+c_{10} s^2+c_{11} s u+c_{12} u^2+c_{16} s+c_{17} u+c_{19}
\qquad \qquad  
&=& 0, \\
3 c_0 s t^2+2 c_1 s t v+c_2 s v^2+c_1 t^2 u+2 c_2 t u v+3 c_3 u v^2+2 c_4 s t+c_5 s v 
+c_{10} t^2 
& &  \\ 
\qquad \qquad 
+c_5 t u+c_{11} t v+2 c_6 u v+c_{12} v^2+c_7 s+c_{13} t+c_8 u+c_{14} v+c_{15} & = & 0, \\
%
3 c_0 s^2 t+c_1 s^2 v+2 c_1 s t u+2 c_2 s u v+c_2 t u^2+3 c_3 u^2 v+c_4 s^2
+2 c_{10} s t  +c_5 s u & & \\ 
\qquad \qquad 
+c_{11} s v+c_{11} t u+c_6 u^2+2 c_{12} u v+c_{13} s+c_{16} t
+c_{14} u+c_{17} v+c_{18} & = & 0 .
\end{matrix}
\end{small}
$$
These four cubic equations in four unknowns $s,t,u,v$ have
 $27$ distinct solutions over the algebraic closure of
$K = \mathbb{Q}(c_0,c_1,\ldots,c_{19})$.
One can try to solve this symbolically (e.g.~in {\tt Magma}), 
but this is  unpractical for dense polynomials~\eqref{eq:cubicf}.

Using combinatorial methods \cite{GRZ}, we can instead compute the set of
$p$-adic valuations  ${\rm val}(s,t,u,v) $ $ \in \mathbb{Q}^4$
of the $27$ solutions $(s,t,u,v) \in \overline{\mathbb{Q}}\vphantom{\mathbb{Q}}^4$.
Moreover, the $p$-adic series expansion of these four scalars up to some desired
order can also be found. To do this, we use the implementation of $p$-adic arithmetic in    {\tt Magma}. 
     The feasibility of this approach is underscored by Conjecture \ref{conj:kristin} below.
   
     Smooth tropical surfaces, with full Newton polytope, come in  $14,373,645$ combinatorial types.  We used the database presented in \cite{JPS} to sample from tropically smooth cubics, and to conduct experiments that support Conjecture~\ref{conj:kristin}. In each case, we also computed the $135$ intersection points in $\PP^3$ over $\mathbb{Q}_p$ and we built the arrangement of $27$ trees using Quartet Puzzling as in Section \ref{sec3}.
  
\begin{conjecture} \label{conj:kristin}
  If the surface (\ref{eq:cubicf}) over $\QQ_p$ is tropically smooth then its lines have distinct tropicalizations.
In particular, the $27$ lines in $\PP^3$, their $135$ intersection points, and the $6$ points in $\PP^2$ obtained by blowing down six skew lines are all defined over 
the $p$-adic field $\mathbb{Q}_p$. The algebraic closure of $\mathbb{Q}_p$ is not needed.
\end{conjecture}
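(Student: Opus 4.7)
The plan is to generalize the Newton-polygon strategy of Theorem \ref{thm:distinct_lines} from the sparse octanomial model to the dense cubic (\ref{eq:cubicf}). First, I would set up the universal Fano scheme over $\mathrm{Spec}\,\ZZ[c_0,\ldots,c_{19}]$ using the four cubic equations in $(s,t,u,v)$ displayed above. By eliminating three of the four variables, each coordinate of a line satisfies a univariate polynomial of degree $27$ with coefficients in $\ZZ[c_0,\ldots,c_{19}]$ (after clearing denominators by a power of the discriminant). Call these $P_s, P_t, P_u, P_v$. A normalized Pl\"ucker coordinate of a line is a rational function in $s,t,u,v$, so its tropicalization is determined by those of the four solution coordinates, and it is enough to show that the $27$ roots of each $P_\bullet$ have distinct valuations.

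Next, I would invoke tropical smoothness exactly as in the proof of Theorem \ref{thm:smoothsmooth}: the vector $\upsilon = \mathrm{val}(c_0,\ldots,c_{19})$ lies in the interior of a maximal cone $C$ of the secondary fan of the support of (\ref{eq:cubicf}). This cone is cut out by a system of linear inequalities among the twenty valuations --- the analogue of (\ref{eq:happytoric2}) --- depending on which of the $14{,}373{,}645$ unimodular regular triangulations $\upsilon$ selects. The strategy is then to read off the Newton polygon of each $P_\bullet$ under these inequalities and verify that it consists of $27$ segments of pairwise distinct slopes with horizontal run $1$. Distinct slopes give distinct valuations of the roots and hence distinct tropicalizations in $\mathbb{TP}^3$. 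Each unit-run slope contributes a Hensel-liftable linear factor, so every root lies in $\QQ_p$; the rationality of the $135$ intersection points and of the $6$ blowdown points in $\PP^2$ then follows by polynomial evaluation and by the classical blowdown formulas applied to a set of six skew $\QQ_p$-lines.

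The main obstacle, and the reason the statement remains a conjecture, is the combinatorial explosion. In Theorem \ref{thm:distinct_lines} we handled only ten representative triangulations and six triplets of roots of degree three; here one faces $14{,}373{,}645$ triangulations and Newton polygons of degree $27$, so a case-by-case check is infeasible. A structural principle is needed. The most promising avenue is the tritangent embedding $\PP^3 \dashrightarrow \PP^{44}$ of Cueto and Deopurkar \cite{CD}, together with their decomposition of Cross functions as products of $\mathrm{E}_6$-roots and a quintic (Remark \ref{rmk:quintic}). One would need to show that for tropically smooth dense cubics the intrinsic separation of the $27$ lines persists under any of the coordinate projections to $\PP^3$ realizing the dense model, equivalently, that no unforeseen cancellation occurs among Cross functions when the Yoshida functions are valued in $C$. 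If such cancellations can be controlled uniformly via the Bergman-fan factorization $\mathrm{Berg}(\mathrm{E}_6) \to \mathrm{trop}(\mathcal{Y}^0)$, the Newton-polygon conclusion should follow across all triangulations without combinatorial enumeration. Experimental evidence from the database of \cite{JPS}, as sampled by the authors, gives strong support that no such cancellations occur in the tropically smooth regime.
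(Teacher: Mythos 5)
The statement you are addressing is a conjecture: the paper gives no proof of it. What the paper offers instead is computational evidence (sampling tropically smooth dense cubics from the database of \cite{JPS} and checking the $27$ lines $p$-adically) together with a pointer to Kristin Shaw's announced theorem over $\CC$, proved via a correspondence theorem between tropical and complex intersection theory using tropical homology \cite{IKMZ, Shaw}; a $p$-adic analogue of that result would imply the conjecture. Your proposal is therefore correctly framed as a strategy rather than a proof, and the obstacle you name is the genuine one: the octanomial proof of Theorem~\ref{thm:distinct_lines} rests on an explicit check of ten representative triangulations with cubic resolvents, and nothing in that argument scales to $14{,}373{,}645$ triangulations with degree-$27$ eliminants. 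Where you and the paper diverge is in the proposed structural principle. You suggest controlling cancellations among the Cueto--Deopurkar Cross functions via the factorization through ${\rm Berg}({\rm E}_6)$; but that machinery is intrinsic (it lives in the moduli $d_1,\ldots,d_6$ and the $\PP^{44}$ embedding), and relating it to the extrinsic tropicalization of an arbitrary dense model is exactly the open passage that Section~\ref{sec4} identifies, so this route begs a question of comparable difficulty. The paper's preferred avenue --- Shaw's correspondence theorem transported to the $p$-adic setting --- bypasses Newton polygons and secondary-fan case analysis altogether.

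Two technical points in your reduction deserve correction. First, requiring that \emph{all} $27$ roots of each eliminant $P_s,P_t,P_u,P_v$ have pairwise distinct valuations is strictly stronger than the conjecture and can fail even when the conjecture holds: two lines may share ${\rm val}(s)$ and be separated by ${\rm val}(t)$. The paper's own proof of Theorem~\ref{thm:distinct_lines} anticipates this by discarding a resolvent whose coefficients lack a unique monomial of minimal valuation and passing to another coordinate; any dense-cubic version of your plan must likewise argue pair-by-pair over all six Pl\"ucker coordinates rather than coordinate-by-coordinate. Second, the rationality claims do not need Hensel lifting of unit-run Newton segments: once the $27$ tropicalizations are distinct, the Galois group of $\overline{\QQ}_p/\QQ_p$ acts on the lines preserving valuations, hence fixes each line individually, and the rationality of the lines, of their $135$ intersection points, and of the six blow-down points in $\PP^2$ follows immediately. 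This is the intended reading of the final sentence of Theorem~\ref{thm:distinct_lines}, and adopting it isolates the true content of the conjecture --- distinctness of the tropicalizations --- as the only thing left to prove.
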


Kristin Shaw announced a proof that every
tropically smooth family of complex cubic surfaces contains
$27$ lines whose tropical limits are distinct (personal communication, 2019).
The approach is based on a correspondence theorem between tropical~\cite{Shaw} and complex intersection theories that is proved using tropical homology~\cite{IKMZ}. 
Shaw's result, if true  in the $p$-adic setting,  would imply Conjecture \ref{conj:kristin}.
 We suspect that the  conjecture holds over all complete discretely valued fields. 
Our Theorem~\ref{thm:distinct_lines} gives further evidence in this direction.

\smallskip

We next explain how to find six points in $\PP^2$ and a basis for the
space of cubics through these points that yields the given cubic (\ref{eq:cubicf}).
We implemented the following method in {\tt Magma}. 
Our code runs fast because computations are done using floating point arithmetic in $\mathbb{Q}_p$.
This fact is based on Conjecture~\ref{conj:kristin}.

Let $S \subset \PP^3$ be the cubic surface in (\ref{eq:cubicf}). Arguing as above, 
we find its $27$ lines $p$-adically. Fix
six pairwise skew lines $E_1,\dots,E_6 \subset S$.
Label the other $21$ lines on $S$ as follows. The line
$F_{ij}$ intersects $E_i$ and $E_j$ but no other $E_k$, 
and $G_i$ intersects  $E_j$ for $j \in \{1,\ldots,6\}\backslash \{i\}$.
 We will compute a morphism $\pi :  S \to \mathbb{P}^2$ that
contracts $E_1,\ldots,E_6$ to points. The map $\pi$ is specified uniquely by requiring
$$ \pi(E_1)=(1:0:0)\,,\,\,\,
     \pi(E_2)=(0:1:0)\,,\,\,\,
     \pi(E_3)=(0:0:1)\,,\,\,\,
      \pi(E_4)=(1:1:1) . $$
      For $i \neq j$, let $H_{ij} $ be the plane spanned by $G_i$ and $E_j$, 
      and let $h_{ij}$ be a linear form defining $H_{ij}$.
      For any point $q\in \mathbb{P}^3\setminus G_i$, let $H_{iq}$ be the plane spanned by $G_i$ and~$q$.  

\begin{theorem}\label{thm:project}
  Let $U_{ij}=S\setminus ( G_i \cup G_j \cup F_{ij} )$. 
  Then $S=U_{12} \cup U_{13} \cup U_{23}$, and the blow-down map 
  $\pi  :  S \rightarrow \PP^2$ is given
  on each chart by a quadratic map as follows:
  \begin{eqnarray*}
    \pi|_{U_{12}}(q) &=&  
 \bigl( \,u_{12} \cdot h_{12}(q)h_{23}(q) \,:\, v_{12} \cdot h_{21}(q)h_{13}(q) \,:\, w_{12}\cdot h_{12}(q)h_{21}(q)\, \bigr), \\
    \pi|_{U_{13}}(q) &=&  
 \bigl(\, u_{13} \cdot h_{13}(q)h_{32}(q) \,:\, v_{13} \cdot h_{13}(q)h_{31}(q) \, :\, w_{13} \cdot h_{31}(q)h_{12}(q) \,\bigr),\\
    \pi|_{U_{23}}(q) &=&  
\bigl( \, u_{23} \cdot h_{23}(q)h_{32}(q) \,: \,v_{23} \cdot h_{23}(q)h_{31}(q) \,:\, w_{23} \cdot h_{32}(q)h_{21}(q) \, \bigr).
  \end{eqnarray*}
  Here $u_{ij},v_{ij},w_{ij} $ are nonzero constants that are determined by $\pi(E_4)=(1:1:1)$.
\end{theorem}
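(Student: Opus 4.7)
The plan is to view $\pi : S \to \PP^2$ as the morphism defined by the complete linear system $|L|$ on $S$, where $L = \pi^*\mathcal{O}(1)$, and to recognize each of the three quadratic expressions in the formula for $U_{12}$ as an element of $|{-}2K_S|$ that reduces to a scalar multiple of $\pi^* x_i$ once a common factor is divided out. First, I would record the Picard classes $F_{ij} = L - E_i - E_j$ and $G_i = 2L - \sum_{k \neq i} E_k$ in $\mathrm{Pic}(S)$, together with the geometric observation that on $S$ one has the divisor equality $H_{ij} \cap S = G_i + E_j + F_{ij}$. This holds because $F_{ij}$ meets both $G_i$ and $E_j$, hence lies in the plane they span, and the residual of $G_i + E_j$ inside a hyperplane section of $S$ (class $3L - \sum E_k$) has class $L - E_i - E_j = F_{ij}$.

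Next, I would compute the three zero divisors
\begin{align*}
(h_{12}h_{23})|_S &= G_1 + E_2 + F_{12} + G_2 + E_3 + F_{23},\\
(h_{21}h_{13})|_S &= G_2 + E_1 + F_{12} + G_1 + E_3 + F_{13},\\
(h_{12}h_{21})|_S &= G_1 + G_2 + E_1 + E_2 + 2F_{12},
\end{align*}
and observe that the divisor $D := G_1 + G_2 + F_{12}$ occurs in all three. The three residuals $E_2+E_3+F_{23}$, $E_1+E_3+F_{13}$, $E_1+E_2+F_{12}$ are precisely the zero divisors of $\pi^*x_1, \pi^*x_2, \pi^*x_3$, since the line $\{x_i=0\}$ in $\PP^2$ pulls back to its proper transform plus the two exceptional divisors over its endpoints. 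Consequently, the three quadratics equal $\pi^* x_i$ multiplied by a common section of $\mathcal{O}(D)$ and by three nonzero constants whose reciprocals are the desired $u_{12}, v_{12}, w_{12}$. The common zero locus of the three quadratics on $S$ is exactly $G_1 \cup G_2 \cup F_{12}$, so the formula defines a morphism on $U_{12}$. The ratios $u_{12}:v_{12}:w_{12}$ are then fixed by the requirement $\pi(E_4) = (1{:}1{:}1)$: since $L \cdot E_4 = 0$, each residual section restricts to a constant on $E_4$, and those three constants must coincide. The charts $U_{13}$ and $U_{23}$ are handled by exactly the same divisor calculation, after permuting the indices $\{1,2,3\}$.

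Finally, for the covering $S = U_{12}\cup U_{13}\cup U_{23}$, I would check that the triple intersection
\[
(G_1\cup G_2\cup F_{12}) \,\cap\, (G_1\cup G_3\cup F_{13}) \,\cap\, (G_2\cup G_3\cup F_{23})
\]
is empty. The intersection pairing on $S$ yields $G_i \cdot G_j = 0$ for $i \neq j$, $F_{ij}\cdot F_{ik} = 0$, and $G_i \cdot F_{jk} = 0$ whenever $i \notin \{j,k\}$, so the corresponding pairs of curves on $S$ are disjoint. A short enumeration of the twenty-seven component-triples---most of which vanish at once by these disjointnesses---shows that the surviving cases collapse to single points which still fail to lie in all three complements. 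I expect the main obstacle to be keeping the divisor bookkeeping transparent in the middle step; once the residuals are correctly matched with $\pi^* x_i$, the remaining assertions are formal.
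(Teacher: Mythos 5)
Your proposal is correct, but it takes a genuinely different route from the paper's proof. The paper argues synthetically: it introduces the projections $\gamma_1:\PP^3\dashrightarrow F_{23}$ from the line $G_1$ and $\gamma_2:\PP^3\dashrightarrow F_{13}$ from $G_2$, normalizes them so that $\gamma_1(q)=(h_{13}(q):\mu\, h_{12}(q))$ and $\gamma_2(q)=(h_{23}(q):\nu\, h_{21}(q))$, and obtains $\pi|_{U_{12}}$ by fusing these two maps to $\PP^1$, using that $\gamma_i|_S$ is $\pi$ followed by projection from a coordinate point of $\PP^2$; the covering $S=U_{12}\cup U_{13}\cup U_{23}$ is handled by inspecting line incidences. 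You instead work in $\mathrm{Pic}(S)$: you identify each plane section $H_{ij}\cap S$ with the tritangent triple $G_i+E_j+F_{ij}$, factor the common divisor $G_1+G_2+F_{12}$ out of the three quadratics, and match the residuals $E_2+E_3+F_{23}$, $E_1+E_3+F_{13}$, $E_1+E_2+F_{12}$ with the divisors of $\pi^*X,\pi^*Y,\pi^*Z$, so that each quadratic is a fixed section of class $-2K_S-L$ times a constant multiple of a pulled-back coordinate; this makes it transparent that the common zero locus is exactly $G_1\cup G_2\cup F_{12}$, and your intersection numbers $G_i\cdot G_j=0$, $F_{ij}\cdot F_{ik}=0$, $G_i\cdot F_{jk}=0$ give the covering. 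Your divisor-theoretic argument is conceptually cleaner and explains why the deleted locus is the right one, at the price of invoking the del Pezzo Picard formalism; the paper's projection argument is more elementary and mirrors how the map is actually computed in their implementation. Two minor wording points: the claim that ``$F_{ij}$ meets $G_i$ and $E_j$, hence lies in the plane they span'' requires the two intersection points to be distinct (no Eckardt point there) — your residual-class argument already covers this — and in the normalization step the three constants obtained on $E_4$ need not coincide a priori; rather $u_{12},v_{12},w_{12}$ are chosen, uniquely up to a common scalar, so that the rescaled values at $E_4$ become $(1{:}1{:}1)$.
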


\begin{proof}
The equation $S=U_{12} \cup U_{13} \cup U_{23}$
is seen by examining the incidences among the lines $F_{ij}$ and $G_k$
that appear in the definition of the open sets $U_{ij}$.

We now prove the formula for $\pi$ on $ U_{12}$. The other two cases follow upon relabeling.
Let $\gamma_1: \PP^3 \dashrightarrow F_{23} $ denote the projection from the line $G_1$,
and let $\gamma_2: \PP^3 \dashrightarrow F_{13} $ denote the projection from $G_2$.
Note that $\gamma_i(E_j)$ is a point if $j \in \{1,2,3,4,5,6\} \backslash \{i\}$.
We identify $F_{23}$ with $\PP^1$ so that
$\gamma_1(E_2) = (1:0)$, $\gamma_1(E_3) = (0:1)$ and $\gamma_1(E_4)=(1:1)$. Likewise,
we identify $F_{13}$ with $\PP^1$ so that
$\gamma_2(E_1) = (1:0)$, $\gamma_2(E_3) = (0:1)$ and $\gamma_1(E_4)=(1:1)$. This
implies
 $\gamma_1(q) = (h_{13}(q): \mu \cdot h_{12}(q) )$ and $\gamma_2(q) = (h_{23}(q): \nu  \cdot h_{21}(q) ) $ where $\mu$ and $\nu$ are constants. The asserted formula for $ \pi|_{U_{12}}(q) $ is obtained by fusing these two projections $S \dashrightarrow \PP^1$.

The blow-down map $\,\pi : S \rightarrow \PP^2\,$ has the property that
$\gamma_1|_S$ is the composition of $\pi$ followed by 
$\PP^2 \dashrightarrow \{ X=0\}  \simeq \PP^1$, and
$\gamma_2|_S$ is $\pi$ followed by 
$\PP^2 \dashrightarrow \{ Y=0\}  \simeq \PP^1$.
We see from our construction that 
$F_{12},F_{13},F_{23}$ are mapped to the three coordinate lines in
$\PP^2$ and that $E_1,E_2,\ldots,E_6$ are mapped to points.
The images  in $\PP^2$ of the other $18$ lines are determined by their intersection patterns 
on $S \subset \PP^3$. In particular, the point $\pi(E_4)$ lies in $\{XYZ \not= 0\}$. This completes the proof.
\end{proof}

The blowup map $\varphi :  \mathbb{P}^2 \dashedrightarrow S \subset \mathbb{P}^3$ 
is defined by a tuple $(g_0,g_1,g_2,g_3)$ of ternary cubics. The cubic curve cut out by $g_i$ 
is the image under $\pi$ of the intersection of $S$ with the $i$-th coordinate plane in $\PP^3$. 
Using the formulas for $\pi$ above, we may compute the vanishing locus of $g_i$ 
and therefore find a cubic $\tilde g_i = \lambda_i g_i$. In practice, we carry
 this out by interpolation using the six base points and the images of the 
 points on $S$ lying on coordinate lines. Since we assumed that $S$ is
  tropically smooth, its points on coordinate lines are all defined over $\QQ_p$.

To find the scalars $\lambda_0,\ldots,\lambda_3$, we consider the map $\tilde\varphi :  \mathbb{P}^2 \dashedrightarrow \mathbb{P}^3$ defined by $(\mu_0\tilde g_0,\dots,\mu_3\tilde g_3)$ with indeterminates $\mu_i$. The additional constraint that the image of $\tilde\varphi$ must lie in $S$ reveals the 
entries of $(\mu_0 , \ldots ,\mu_3)=(\lambda_0^{-1}, \ldots , \lambda_3^{-1})$.

\smallskip

We now turn to Question~11 from the $27$ questions:
{\em How to construct six points  with integer coordinates in $\PP^2$,
and a basis for the space of cubics  through these points,
such that the resulting cubic surface in $\PP^3$ has
a smooth tropical surface for its $p$-adic tropicalization?
Which unimodular triangulations arise?}

The following theorem, which is conditional on our conjecture,
would give the definitive answer to Question 11. It implies that every
smooth tropical cubic arises, and hence so does each of the
$14,373,645$ unimodular triangulations.

\begin{theorem} \label{thm:eleven}
Suppose Conjecture \ref{conj:kristin} holds. Fix any 
tropically smooth cubic (\ref{eq:cubicf}) whose coefficients $c_i$ are $p$-adic numbers.
Then there exist six points in $\PP^2$ and a basis for their cubics, both defined over
the rational numbers $\QQ$,
such that the resulting classical cubic surface in $\PP^3$ has the same tropicalization as (\ref{eq:cubicf}).
\end{theorem}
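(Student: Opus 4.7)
The plan is to use Conjecture \ref{conj:kristin} and Theorem \ref{thm:project} to realize the blow-up data for $S$ over $\QQ_p$, and then to approximate this data by rationals closely enough in the $p$-adic topology to preserve the tropicalization of the resulting cubic surface.

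First I would invoke the conjecture: since $S$ is tropically smooth over $\QQ_p$, it guarantees that all $27$ lines on $S$ are defined over $\QQ_p$. I would then choose six pairwise skew lines $E_1,\ldots,E_6 \subset S$ and apply Theorem \ref{thm:project} to construct the blow-down $\pi : S \to \PP^2$ over $\QQ_p$. After normalizing so that $\pi(E_1),\ldots,\pi(E_4)$ are the four standard points $(1:0:0), (0:1:0), (0:0:1), (1:1:1)$, the remaining points $P_5 = \pi(E_5)$ and $P_6 = \pi(E_6)$ lie in $\PP^2(\QQ_p)$. Using the interpolation procedure described immediately after Theorem \ref{thm:project}, I would compute a $\QQ_p$-basis $(g_0, g_1, g_2, g_3)$ of ternary cubics through $P_1, \ldots, P_6$ such that $\varphi = (g_0:g_1:g_2:g_3)$ parametrizes $S$ exactly.

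Next, I would approximate by rationals. For a precision $N$ to be chosen, select $\tilde P_5, \tilde P_6 \in \PP^2(\QQ)$ congruent to $P_5, P_6$ modulo $p^N$, keeping $\tilde P_1, \ldots, \tilde P_4$ equal to the four integer points, and pick rational cubics $\tilde g_0, \ldots, \tilde g_3 \in \QQ[X,Y,Z]$ whose coefficients are congruent to those of $g_0,\ldots,g_3$ modulo $p^N$. The map $\tilde \varphi = (\tilde g_0:\cdots:\tilde g_3)$ then defines a cubic surface $\tilde S \subset \PP^3$ over $\QQ$, and its defining equation $\tilde f = \sum_i \tilde c_i\, m_i$ is extracted from $\tilde \varphi$ by an elimination step that is a $p$-adically continuous polynomial operation. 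Since the reference $\varphi$ is a birational parametrization of the irreducible surface $S$, the derivative of this elimination map is nonzero at $(g_0, \ldots, g_3)$, and I expect $\tilde c_i \equiv c_i \pmod{p^{N-M}}$ for a fixed constant $M$ depending only on this derivative.

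Finally, tropical smoothness of $S$ makes all valuations $\mathrm{val}(c_i)$ finite, and picking $N$ large enough that $N - M$ exceeds every $\mathrm{val}(c_i)$ will force $\mathrm{val}(\tilde c_i) = \mathrm{val}(c_i)$ for every $i$. Since the tropicalization of a hypersurface depends only on the valuations of its nonzero coefficients, $\tilde S$ and $S$ will then share a single tropicalization in $\RR^4/\RR\mathbf{1}$. The hard part will be the quantitative control in the third paragraph: one must verify that the elimination step is indeed a polynomial map whose Jacobian is nondegenerate at the reference point, and give an effective estimate for $M$. Granting this continuity estimate, the rational data $\{\tilde P_i\}$ and $\{\tilde g_j\}$ produce a classical cubic $\tilde S$ over $\QQ$ with the prescribed tropicalization.
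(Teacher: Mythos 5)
Your overall strategy matches the paper's: invoke Conjecture \ref{conj:kristin} to get the blow-down data over $\QQ_p$ via Theorem \ref{thm:project}, approximate that data by rational data to high $p$-adic precision, and argue that passing from the parametrization to the implicit cubic equation is continuous, so the coefficient valuations (hence the tropicalization) are preserved. However, there is a genuine gap in your approximation step. You take $\tilde g_0,\ldots,\tilde g_3$ to be \emph{arbitrary} rational cubics congruent to $g_0,\ldots,g_3$ modulo $p^N$, with no requirement that they vanish at the rational points $\tilde P_1,\ldots,\tilde P_6$. This fails on two counts. First, the theorem demands ``six points in $\PP^2$ and a basis for their cubics'' over $\QQ$; a generic small perturbation of $g_i$ does not pass through $\tilde P_5,\tilde P_6$ (the exact $g_i$ pass through the irrational $P_5,P_6$), so your tuple is not such a basis. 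Second, and more seriously, the image of $\PP^2$ under four cubics with no common base points is a surface of degree $9$, not $3$; it is exactly the six base points that cut the degree down to three. So the ``elimination step'' applied to your $\tilde\varphi$ would not produce a cubic $\tilde f=\sum_i \tilde c_i m_i$ at all, and the subsequent Jacobian/continuity argument has nothing to attach to. The paper repairs precisely this: it lets $V$ be the space of ternary cubics through the rational approximations $q_1,\ldots,q_6$ and replaces each $g_i$ by its $p$-adic orthogonal projection $\tilde g_i$ into $V$ (non-archimedean Gram--Schmidt), with the quantitative statement that $\|g_i-\tilde g_i\|$ is controlled by $\|p_i-q_i\|$; then $\tilde\varphi$ genuinely blows up six rational points and its image is a cubic.

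A second, smaller divergence: you flag as the ``hard part'' the verification that the parametrization-to-equation map is polynomial with nondegenerate derivative and an effective constant $M$. The paper sidesteps any implicit-function estimate by an explicit construction: evaluate $\varphi$ and $\tilde\varphi$ at $19$ general rational points of $\PP^2$, evaluate the $20$ cubic monomials in $x,y,z,w$ at the images to form $19\times 20$ matrices $M$ and $\tilde M$, and read off the coefficients $c_i,\tilde c_i$ as maximal minors via Cramer's rule. Minors are polynomial in the entries, so closeness of $\tilde \xg$ to $\xg$ immediately gives closeness of the coefficient vectors in the $p$-adic supremum norm, and hence equality of valuations once the precision exceeds $\max_i \mathrm{val}(c_i)$ --- which is the same endgame as yours. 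If you adopt the projection-into-$V$ step and replace your derivative estimate by this Cramer's-rule argument, your proof becomes essentially the paper's.
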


\begin{proof}
Two classical cubics in $\PP^3$ have the same tropicalization if the valuations of 
their coefficients coincide.
This holds if the distance between their coefficient vectors in $\QQ_p^{20}$ is small 
with respect to the  $p$-adic supremum norm.

Let $S$ be the tropically smooth cubic in~\eqref{eq:cubicf}.
Let  $\xp=(p_1,\dots,p_6)$ and $\xg=(g_0,\dots,g_3)$ be the six points and four ternary cubics constructed by blowing down $S$ as above. We claim that $\xp$ and $\xg$ can be approximated with rational coefficients so that the new cubic defined by these approximations is close to~\eqref{eq:cubicf}.

   If Conjecture~\ref{conj:kristin} holds then the six points $\xp$ are defined over $\QQ_p$. Therefore, we can find six rational points $\xq=(q_1,\dots,q_6)$ approximating $\xp$ to any specified degree of $p$-adic accuracy. Let $V$ be the space of ternary cubics passing through $\xq$ and let $\tilde \xg = (\tilde g_0,\dots,\tilde g_3) $ be orthogonal projections of the cubics in $\xg$ onto $V$. 
 Orthogonal projection can be defined in the $p$-adic setting, and the distances 
 $||g_i-\tilde g_i||$ are comparable to the distances $||p_i-q_i||$ 
in an explicit fashion. We can and do choose $\tilde \xg$ to have rational coefficients.
Let  $\tilde S$ be the resulting cubic in $\PP^3$.
  
 The coefficient vectors of the cubic surfaces $S$ and $\tilde S$ span the kernels of 
   the following $19 \times 20$ matrices $M$ and $ \tilde M$.
       Choose $19$ general points in $ \PP^2_{\QQ}$, evaluate the maps 
   $ \PP^2 \to \PP^3$ given by $\xg$ and $\tilde \xg$ on these points, and then evaluate 
  all cubic monomials in $x,y,z,w$ on the image points  to get the columns of $M$ and $\tilde M$.
    
    By Cramer's rule, the coefficients $c_i,\tilde c_i$ of $S,\tilde S$ are the maximal minors of
     $M,\tilde M$. Since $\xg$ and $\tilde \xg$ are arbitrarily close, so are the entries of 
     the matrices $M$ and $\tilde M$. The
     cubic forms that define the surfaces $S$ and $\tilde S$ are now arbitrarily close 
     in the $p$-adic supremum norm. Hence, $S$ and $\tilde S$ have the same
     tropicalization.
\end{proof}

The proof of Theorem~\ref{thm:eleven} translates into the following algorithm
for answering Question~11. 
We begin by choosing one of the 
$14,373,645$ distinguished representatives in \cite{JPS} for the
  smooth tropical cubic surfaces. 
Next we choose a classical cubic $S$ with full support (\ref{eq:cubicf}) having that tropicalization.
We then check that the $27$ lines have distinct tropicalizations
as asserted by Conjecture~\ref{conj:kristin}. 
We compute $(\xp,\xg)$
and a rational approximation
$(\tilde \xp, \tilde \xg)$
as explained in the proof.
 The final step is to verify that $S$ and $\tilde S$ have the same
tropicalization. We implemented this algorithm in {\tt Magma}.
It is available at our website.

\begin{example} \label{ex:type7} \rm
We illustrate the algorithm
for  the combinatorial
  type \# 7 in \cite{JPS}.
   The representative coefficient vector for this type of tropical cubic surface is
  $$   ({\rm val}(c_0),\ldots,{\rm val}(c_{19})) \,\,=\,\,   (16,7,3,11,9,2,5,3,0,0,5,0,4,3,0,2,6,5,8,15).  $$
  We fix $p=5$ and choose the canonical lifts
  $  c_0 = 5^{16}, \,c_1 = 5^7\! ,\,c_2 = 5^3\!,\, \ldots, \,c_{19} = 5^{15}$. 
   We compute the $27$ lines in this cubic,  and we find that it is Naruki general.
  
 We identify six skew lines $E_1,\dots,E_6$ and write the  map $\pi$ as in Theorem~\ref{thm:project}.
The six points in $\PP^2$ are $p_i=\pi(E_i)$, with $p_1,p_2,p_3,p_4$ in standard position.
For the other two points $p_5,p_6$, the first few terms 
in their $p$-adic expansions~are
$$ 
\begin{small}
  \begin{matrix}
    p_5  &=&  \bigl( \,5^{-1}\!\cdot (4 + 4\cdot 5^1 + 1\cdot 5^2 + 3\cdot 5^3 + \dots) \,\,:\,\,\, 
      5^{5}\! \cdot (4 + 4\cdot 5^1 + 4\cdot 5^2 + 0\cdot 5^3 + \dots)\,\, :\,\, 1\,\bigr), \\
    p_6 &=& \bigl(\,5^{7}\!\cdot (1 + 4\cdot 5^1 + 1\cdot 5^2 + 1\cdot 5^3 + \dots) \,\,:\,\,\,
      5^{5}\!\cdot (4 + 4\cdot 5^1 + 4\cdot 5^2 + 4\cdot 5^3 + \dots) \,\,:\,\, 1\,\bigr).
\end{matrix}
\end{small}
$$
  We next replace these by nearby points over $\QQ$. We set $q_i=p_i$ for $i=1,2,3,4$. We round $p_5$ and $p_6$ to rational points while retaining $14$ digits of precision:
$$ \begin{matrix}
  q_5 &=& \bigl( \,2473616049/5 \,\,:\,\, -425393750 \,\,: \,\, 1 ), \\
  q_6 &=& \bigl( \,1331718750 \,\,:\,\, -2324221875 \,\,:\, \, 1 \,\bigr).
\end{matrix} $$
  
  The special $\mathbb{Q}_p$-basis $\xg = (g_0,\ldots,g_3)$ of cubics 
  through $\xp = (p_1,\ldots,p_6)$ is found as described 
  after Theorem~\ref{thm:project}.  Fix
  any $\mathbb{Q}$-basis for the space $V$ of cubics 
  through $\xq = (q_1,\ldots,q_6)$. We project
each  $g_i$ into $V$ via the non-archimedean version of the Gram--Schmidt process
\cite[Section 2.3]{UZ}. This step is done over~$\mathbb{Q}_p$.
The image cubics are  rounded to cubics over $\mathbb{Q}$
while staying in $V$ and preserving the distance to $\xg$. We used a variation of $p$-adic LLL explained in~\cite{IN} to find good rational approximations of $p$-adic vectors.
The result is $\tilde \xg = (\tilde g_0,\ldots,\tilde g_3)$. 
The cubic surface
  $\tilde S \subset \PP^3$ is the image of the map $\xg$.
  Its rational coefficients $\tilde c_i $ are very big. Their valuations match those of 
  the $c_i$ we started with. For instance,
  $$
  \begin{small}
  \begin{matrix}
  \tilde c_0 \!\!\! & = & \!\!\!\!
5^{16}\cdot (1 + 3\cdot 5^{1} + 4\cdot 5^{2} + 3\cdot 5^{3} + 2\cdot 5^{4} + 4\cdot 5^{5} + 2\cdot 5^{6} + 4\cdot 5^{7} + 1\cdot 5^{9} + 3\cdot 5^{12} \quad \\
& & + 3\cdot 5^{13} + 1\cdot 5^{14} + 1\cdot 5^{16} + 1\cdot 5^{18} + 4\cdot 5^{19} + 1\cdot 5^{20} + 4\cdot 5^{21} + 2\cdot 5^{22} + 4\cdot 5^{24} + \dots  \quad \\ 
& & \dots +  3\cdot 5^{209} + 4\cdot 5^{210} + 2\cdot 5^{212} + 2\cdot 5^{213} + 2\cdot 5^{214} + 1\cdot 5^{215} + 4\cdot 5^{216} + 2\cdot 5^{217}).
\end{matrix}
\end{small}
$$
This concludes our derivation of an explicit example for answering Question 11.
\end{example}

We close Section~\ref{sec4} by returning to the octanomial model.
We explain how to derive, for a given cubic, the moduli coordinates $d_i$ and thus the normal form  (\ref{eq:octanomial}).
This requires us to identify a cuspidal cubic through our six points in $\PP^2$
and to transform that cubic into the standard form $\{X^2 Z = Y^3\}$.
From this we can then read off the matrix (\ref{eq:sixpoints}). We carry out this
computation in {\tt Magma} as follows.

Our input is six points $p_1,\dots,p_6$ in $\PP^2$ over a field $K$.
There is a web of cubic curves passing through them. We choose a seventh 
point $p_7$ to cut down the dimension and obtain a net 
$\mathcal{N} \simeq \PP^2$ of cubics. If the points are general enough 
then $\mathcal{N}$ contains $24$ cuspidal cubics. Hence, 
the product space $\mathcal{N} \times \PP^2$ contains $24$ 
pairs $(f,r)$ where $r$ is a cusp on the cubic $\{f=0\}$.
These $24$ points are defined by bihomogeneous equations represented by the
gradient and the $2\times 2$ minors of the Hessian of $f \in \mathcal{N}$. Our 
equations are $\,\nabla f(r) = 0 \,$ and  $\,{\rm rank} \bigl({\rm He}(f)(r)\bigr) =1$.

Extending to $\overline{K}$, we pick one solution $(f,r)$. The cubic curve $\{f=0\}$
passes through $p_1,\ldots,p_7$ and has a cusp at $r$.
In addition, it has a unique smooth inflection point $r'$. Let $\ell_0,\ell_1,\ell_2$ denote linear forms defining the cuspidal tangent, the line through $r$ and $r'$, and the inflection tangent at $r'$ such that $f=\ell_1^3-\ell_0^2\ell_2$.

The triple $(\ell_0,\ell_1,\ell_2)$ defines the 
automorphism of $\PP^2_{\overline{K}}$ that puts the cubic $C$
 into standard form $X^2Z=Y^3$. In particular, the $E_6$ moduli 
 can now be read off:
\[
  \qquad  d_i \,\,\, = \,\,\,  {\ell_1(p_i)}/{\ell_0(p_i)} \qquad
  {\rm for} \,\,\,\, i = 1,2,\ldots, 6. 
\]
We explain how to implement this method in our setting,
where the given cubic (\ref{eq:cubicf}) has rational coefficients $c_i$.
We first identify the points
 $p_1,\ldots,p_6$ in $\PP^2_K$, where  $K = \mathbb{Q}_p$ as above.
 For computing the $24$ solutions $(f,r)$ in
 $\mathcal{N} \times \PP^2_K$, one would like to use
    Gr\"obner bases. But, this requires
 special care because the polynomials to be solved have
  numerical coefficients, namely series in $\mathbb{Q}_p$.
   
 Instead, we work with the rational approximations $q_1,\dots,q_6 \in \PP^2_\QQ$
 computed above. We also choose $q_7 \in \PP^2_\QQ$.
    A cuspidal curve through $q_1,\ldots,q_7$  typically does not exist over $\QQ$.
    But, it often exists over $\QQ_p$, depending on the choice of~$q_7$.
    We always succeeded with this after several tries. Now, the pair
    $(f,r)$ has been found over $\QQ_p$.   
     The rest of the computation is
    linear algebra over $\QQ_p$.
    Namely,  we compute     $\ell_0$ as the tangent line to $f$ at $r$.
    The Hessian of $f$ equals $ \ell_0^2\ell_1$
    times a constant, so we can find $\ell_1$ up to constant.
Next, the inflection point $r'$ is found by intersecting the
curve $f$ with line $\ell_1$. Finally $\ell_2$ is the inflection line at $r'$.
The constants are now found from the desired equation
$\,f=\ell_1^3-\ell_0^2\ell_2$.

We applied this method to the rational 
configuration $q=(q_1,\dots,q_6)$ in Example~\ref{ex:type7}.
From the resulting cuspidal cubic $f$, we read the moduli parameters 
$$
\begin{small}
\begin{matrix}
d_1 & = &   1, \\
d_2 & = &   2 + 2\cdot 5^1 + 2\cdot 5^2 + 3\cdot 5^3 + 2\cdot 5^4 + 1\cdot 5^5 + 2\cdot 5^6 + 4\cdot 5^9 \dots,\\
d_3 & = &   2 + 4\cdot 5^1 + 4\cdot 5^2 + 4\cdot 5^3 + 2\cdot 5^4 + 4\cdot 5^5 + 4\cdot 5^6 + 2\cdot 5^7 + 4\cdot 5^8 + 1\cdot 5^9 + \dots,\\
d_4 & = &   2 + 2\cdot 5^1 + 2\cdot 5^2 + 3\cdot 5^3 + 4\cdot 5^4 + 3\cdot 5^5 + 4\cdot 5^6 + 3\cdot 5^9 + \dots,\\
d_5 & = &   1 + 2\cdot 5^4 + 2\cdot 5^5 + 1\cdot 5^6 + 2\cdot 5^7 + 1\cdot 5^8 + \dots,\\
d_6 & = &   1 + 3\cdot 5^1 + 2\cdot 5^2 + 1\cdot 5^3 + 4\cdot 5^4 + 3\cdot 5^5 + 4\cdot 5^6 + 1\cdot 5^7 + 4\cdot 5^8 + 1\cdot 5^9 + \dots 
\end{matrix}
\end{small}
$$
The full details on this example, and on all others, are found on our website.

\begin{small}

\end{small}

\end{document}